\documentclass[11pt,oneside,a4paper,reqno]{amsart}
\RequirePackage{fix-cm} 
\usepackage[T1]{fontenc}
\usepackage[english]{babel}
\usepackage{amssymb,amsmath,amsthm,graphicx,amsfonts}
\delimitershortfall=10pt
\delimiterfactor=850

\usepackage{lmodern}

\usepackage[dvipsnames]{xcolor}

\usepackage{enumerate}
\usepackage{hyperref}
\hypersetup{
colorlinks,
citecolor=Mahogany,
linkcolor=Mahogany,
urlcolor=Mahogany}
 
\usepackage{geometry}
\setlength{\oddsidemargin}{0in}
\setlength{\textwidth}{15.6cm}
\setlength{\topmargin}{-0.1in}
\setlength{\headsep}{0.6in}
\setlength{\textheight}{8.7in}
\setlength{\parskip}{0.5em}

\makeatletter
\def\@seccntformat#1{\hspace*{0mm}%
 \protect\textup{\protect\@secnumfont
   \ifnum\pdfstrcmp{subsection}{#1}=0 \bfseries\fi
   \csname the#1\endcsname
   \protect\@secnumpunct
     }%
}
\newcommand{\assign}{:=}
\newcommand{\mathD}{\mathrm{D}}
\newcommand{\mathLaplace}{\Delta}
\newcommand{\mathd}{\mathrm{d}}
\newcommand{\of}{:}
\newcommand{\suchthat}{:}
\newcommand{\tmSep}{; }
\newcommand{\tmabbr}[1]{#1}
\newcommand{\tmem}[1]{{\em #1\/}}
\newcommand{\tmmathbf}[1]{\ensuremath{\boldsymbol{#1}}}
\newcommand{\tmname}[1]{\textsc{#1}}
\newcommand{\tmop}[1]{\ensuremath{\operatorname{#1}}}
\newcommand{\tmsep}{, }
\newcommand{\tmstrong}[1]{\textbf{#1}}

\newcommand{\tmtextsf}[1]{\text{{\sffamily{#1}}}}

\newenvironment{enumerateroman}{\begin{enumerate}[i.] }{\end{enumerate}}
\newtheorem{lemma}{Lemma}
\newtheorem{proposition}{Proposition}
{\theoremstyle{remark}\newtheorem{remark}{Remark}}
\newtheorem{theorem}{Theorem}

\newcommand{\limi}[1]{\underset{#1}{\liminf}}
\newcommand{\limid}[1]{\liminf #1}
\newcommand{\lcup}{{\bigcup}}
\newcommand{\bp}{\,}
\newcommand{\Aop}{\mathsf{K}}
\newcommand{\Jop}{\mathsf{J}}
\newcommand{\Mop}{\mathsf{M}}
\newcommand{\Rop}{\mathsf{R}}
\newcommand{\vv}{\tmmathbf{v}}
\newcommand{\sss}{s}
\newcommand{\uu}{\tmmathbf{u}}
\newcommand{\calN}{\mathcal{N}}
\newcommand{\jmu}{\tmmathbf{\mu}}
\newcommand{\jM}{M}
\newcommand{\tmt}{\tmmathbf{\tau}}
\newcommand{\N}{N}
\newcommand{\Om}{\text{\tmtextsf{\ensuremath{\Omega}}}}
\newcommand{\T}{\top}

\newcommand{\m}{\tmmathbf{m}}

\newcommand{\grad}{\nabla}
\newcommand{\M}{\tmmathbf{M}}
\newcommand{\Stwo}{\mathbb{S}}
\newcommand{\RR}{\mathbb{R}}
\newcommand{\NN}{\mathbb{N}}
\newcommand{\eqs}{=}

\begin{document}

\title{Curved thin-film limits of chiral Dirichlet energies}

\author{G{\tmname{iovanni}} D{\tmname{i}} {\tmname{Fratta}}}
\address{Giovanni Di Fratta, Dipartimento di Matematica e Applicazioni ``R.
Caccioppoli'', Universit{\`a} degli Studi di Napoli ``Federico II'', Via
Cintia, Complesso Monte S. Angelo, 80126 Napoli, Italy}

\author{{\tmname{Valeriy Slastikov}}}
\address{Valeriy Slastikov, School of Mathematics, University of 
Bristol\\ Bristol BS8 1UG, United Kingdom}

\begin{abstract}
  We investigate the curved thin-film limit of a family of perturbed
  Dirichlet energies in the space of $H^1$ Sobolev maps defined in a tubular
  neighborhood of an $(n - 1)$-dimensional submanifold $\N$ of $\RR^n$ and
  with values in an $(m - 1)$-dimensional submanifold $\jM$ of $\RR^m$. The
  perturbation $\Aop$ that we consider is represented by a matrix-valued
  function defined on $\jM$ and with values in $\RR^{m \times n}$. Under
  natural regularity hypotheses on $\N$, $\jM$, and $\Aop$, we show that the
  family of these energies converges, in the sense of
  $\Gamma$-convergence, to an energy functional on $\N$ of an unexpected form,
  which is of particular interest in the theory of magnetic skyrmions. As a
  byproduct of our results, we get that in the curved thin-film limit,
  antisymmetric exchange interactions also manifest under an anisotropic term
  whose specific shape depends both on the curvature of the thin film and the
  curvature of the target manifold. Various types of
  antisymmetric exchange interactions in the variational theory of
  micromagnetism are a source of inspiration and motivation for our work.
\end{abstract}

\subjclass{49S05\tmSep  35C20\tmSep  35Q51\tmSep  82D40}

\keywords{Dirichlet energy, Harmonic maps, $\Gamma$-Convergence,
Micromagnetics\tmsep  Magnetic thin films\tmsep  Dzyaloshinskii--Moriya
interaction\tmsep Magnetic skyrmions}

{\maketitle}

\section{Introduction and motivation}

In this paper, we investigate the curved thin-film limit of a perturbed
Dirichlet energy of the form
\begin{equation}
  \mathcal{G}_{\varepsilon} \left( \vv \right) \assign \frac{1}{2 \varepsilon}
  \int_{\Om_{\varepsilon}} \left| D \vv (x) + \Aop \left( \vv (x) \right)
  \right|^2 \mathd x \label{eq:mainenfunc}
\end{equation}
defined on $H^1$ Sobolev maps $\vv : \Om_{\varepsilon} \rightarrow M$, where
for every $\varepsilon > 0$ the domain $\Om_{\varepsilon} \subseteq \RR^n$ is
the $\varepsilon$-tubular neighborhood of an $(n - 1)$-dimensional submanifold
$\N$ of $\RR^n$, and $M$ is an $(m - 1)$-dimensional submanifold of $\RR^m$.
The perturbation $\Aop$ is represented by a matrix-valued function defined on
$\jM$ and with values in $\RR^{m \times n}$ ($m$ rows, $n$ columns). We show
that under natural regularity hypotheses on $\N$, $\jM$, and $\Aop$,
the family $(\mathcal{G}_{\varepsilon})$ converges, in the
sense of $\Gamma$-convergence, to an energy functional on $\N$, which strongly
depends both on $N$ and $M$ and reveals remarkable physical implications to
relevant systems, for instance, to magnetic materials.

Our analysis shows that in the curved thin-film regime, when general base and
target manifolds are considered, generic antisymmetric exchange interactions
manifest themselves under an additional anisotropy term whose specific shape
depends both on the curvature of the thin film and the curvature of the target
manifold.

A specific form of \eqref{eq:mainenfunc}, namely the one emerging when $\N$ is
a {\tmem{planar}} surface in $\RR^2 \times \{ 0 \}$, $\jM \assign \Stwo^2$,
and $\Aop$ is the antisymmetric matrix $\Aop \left( \vv \right) = \bullet \,
\times \vv$, was recently addressed in the framework of
micromagnetics~{\cite{Davoli2022}}. This specific choice of $\N, \jM$, and
$\Aop$, describes a micromagnetic energy functional \eqref{eq:mainenfunc} that
accounts for antisymmetric interactions known as {\tmem{bulk
Dzyaloshinskii--Moriya interactions}} (DMI), whose presence explains the
emergence of chiral spin textures known as magnetic skyrmions
{\cite{Fert_2017,nagaosa2013topological}}. Even in this particular case, the
derived limiting model in {\cite{Davoli2022}} revealed intriguing physics: a
portion of the isotropic bulk DMI contributes to the shape anisotropy
originating from the magnetostatic self-energy. Our extension unveils
additional mechanisms even in the planar setting. Namely, what happens when
the target manifold is not really $\Stwo^2$, the DMI contribution is
anisotropic, or there are temperature variations in the ferromagnetic media
(see Subsection~\ref{subsec:physicalcontext}).

Given the importance of the energy $\mathcal{G}_{\varepsilon}$ in the analysis
of antisymmetric interactions, we will also refer to the energy
\eqref{eq:mainenfunc} as the {\tmem{chiral}} {\tmem{Dirichlet energy}}, even
when $\Aop$ is not antisymmetric.

\subsection{Outline}To adequately explain the ramifications of our findings,
we first review the physical framework that led us to the investigation of
\eqref{eq:mainenfunc}. In Subsection~\ref{subsec:stateofart}, we sum up
earlier research on the subject. In Section~\ref{subsec:rewriteG}, we describe
the rigorous setting of the problem and detail the contributions of the
present work. Proofs are given in Section~\ref{sec:proofmain}. In
Section~\ref{sec:applicationsMicromag}, we look at several applications of our
results to the variational theory of micromagnetism.

\subsection{Physical context}\label{subsec:physicalcontext}In single-crystal
ferromagnets, the observable magnetization states correspond to the local
minimizers of the micromagnetic energy functional, which, after normalization,
reads as~{\cite{BrownB1963,LandauA1935}}
\begin{equation}
  \mathcal{G}_{\tmop{sym}} (\m) \assign \mathcal{H}_{\tmop{ex}} \left( \m
  \right) +\mathcal{W} (\m) \assign \frac{1}{2}  \int_{\Om} | \nabla \m (x)
  |^2 \mathd x + \frac{1}{2}  \int_{\RR^3} \left| \grad u_{\m} (x) \right|^2
  \mathd x, \label{eq:vsrintercsmicromag}
\end{equation}
with $\m \in H^1 (\Om, \Stwo^2)$. The nonlocal term $\mathcal{W} (\m)$
represents the {\tmem{magnetostatic self-energy}} and describes the energy due
to the demagnetizing field $\grad u_{\m}$ generated by $\m \chi_{\Om}$, where
$\m \chi_{\Om}$ denotes the extension of $\m$ by zero in $\RR^3 \setminus
\Om$. The scalar potential $u_{\m}$ can be characterized as the unique
solution in $H^1 (\RR^3)$ of the Poisson equation
{\cite{BrownB1962,Di_Fratta_2019_var,praetorius2004analysis}}:
\begin{equation}
  - \mathLaplace u_{\m} = \tmop{div} \m \chi_{\Om} \quad \text{in }
  \mathcal{D}' \left( \RR^3 \right) . \label{eq:magnetostaticpotential}
\end{equation}
The {\tmem{exchange energy}} $\mathcal{H}_{\tmop{ex}}$ penalizes
nonuniformities in the orientation of the magnetization. Up to a constant
term, $\mathcal{H}_{\tmop{ex}}$ can be considered as the very-short-range
limit of a family of nonlocal Heisenberg models of the type
\begin{equation}
  \mathcal{H}_{\tmop{ex}}^{\varepsilon} \left( \m \right) \assign - \int_{\Om
  \times \Om} \omega_{\varepsilon} (| y - x |)  \m (x) \cdot \m (y) \mathd x
  \mathd y, \label{eq:Hexsymm}
\end{equation}
with $\omega_{\varepsilon}$ being a family of interaction kernels which tends to
concentrate around the origin ({\tmabbr{cf.}}~{\cite{Bourgain2001}}).

The interaction energy $\mathcal{H}_{\tmop{ex}}^{\varepsilon}$ is physically
{\tmem{symmetric}} in the {\tmem{lattice points}} $x, y$ where the magnetic
dipoles $\m (x)$, $\m (y)$ are located and, mathematically, this is captured
in the commutativity of the dot product in \eqref{eq:Hexsymm}. However,
magnetic materials with low crystallographic symmetry can exhibit a weak
{\tmem{antisymmetric}} exchange
interaction~{\cite{dzyaloshinsky1958thermodynamic,moriya1960anisotropic}}: a
relativistic effect caused by the spin-orbit coupling among neighbor magnetic
spins. This type of interaction was initially predicted and investigated by
Dzyaloshinskii~{\cite{dzyaloshinsky1958thermodynamic}} and
Moriya~{\cite{moriya1960anisotropic}}. For this reason, antisymmetric exchange
interactions are usually referred to as Dzyaloshinskii--Moriya interactions
(DMI).

The DMI induces a spin canting of the magnetic moments, whereas symmetric
exchange interactions favor parallel-aligned spins. Mimicking the expression
of the DMI energy associated with a discrete distribution of magnetic moments
({\tmabbr{cf. }}{\cite[p.~862]{Coey2021}}), one can assume that for a
continuous distribution of magnetic moments $\m \in L^2 \left( \Om, \Stwo^2
\right)$, after normalization, the overall antisymmetric interactions among
magnetic moments is described by the Hamiltonian function
\begin{equation}
  \mathcal{H}_{\tmop{DM}}^{\varepsilon} \left( \m \right) \assign \int_{\Om
  \times \Om} \tmmathbf{d}_{\varepsilon} (| y - x |) \cdot \left( \m (x)
  \times \m (y) \right) \mathd x \mathd y. \label{eq:nonlocDMI}
\end{equation}
The Dzyaloshinskii vector $\tmmathbf{d}_{\varepsilon}$ is an axial vector
that, other than from the relative distance between the magnetic moments $\m
(x)$ and $\m (y)$, also depends on the symmetry group of the crystal lattice;
its precise form has to be determined following Moriya's
rules~{\cite{moriya1960anisotropic}}.

Under appropriate assumptions on the $\varepsilon$-decay rate (and
concentration) of the family $(\tmmathbf{d}_{\varepsilon})_{\varepsilon > 0}$
in \eqref{eq:nonlocDMI}, one can obtain the very-short-range limiting model of
{\tmem{antisymmetric}} exchange interactions
\begin{equation}
  \mathcal{H}_{\tmop{DM}} \left( \m \right) = \sum_{i = 1}^3 \int_{\Om}
  \partial_i \m (x) \cdot \left( \tmmathbf{d}_i \times \m (x) \right) \mathd x
  \label{eq:H1asymDMI}
\end{equation}
with $(\tmmathbf{d}_i)_{i = 1}^3$ being suitable constant vectors that depend
on the symmetry of the crystal lattice. The energy $\mathcal{H}_{\tmop{DM}}$
in \eqref{eq:H1asymDMI} is, nowadays, the primary term used in the variational
theory of micromagnetics to explain the emergence of magnetic skyrmions.

Overall, in the presence of {\tmem{both}} symmetric and antisymmetric
interactions, the Heisenberg interaction energy, up to a constant factor, can
be expressed under the general form:
\begin{equation}
  \mathcal{H}_{\Om} \left( \m \right) \assign \frac{1}{2}  \sum_{i = 1}^3
  \int_{\Om} | \partial_{\tau_i (\xi)} \m (x) |^2 + 2\, \partial_i \m (x) \cdot
  \left( \tmmathbf{d}_i \times \m (x) \right) \mathd x.
  \label{eq:mainfunctionalinterest0}
\end{equation}
Therefore, if we denote by $\Jop$ the linear operator mapping the standard
basis $(\tmmathbf{e}_i)_{i = 1}^3$ into the vectors $(\tmmathbf{d}_i)_{i =
1}^3$, and denote by $\Mop \left( \m \right) \assign \bullet \, \times \m$ the
$\m$-dependent {\tmem{antisymmetric}} linear operator representing the action
of the cross-product with $\m$, then \eqref{eq:mainfunctionalinterest0} can be
rewritten as
\begin{align}
  \mathcal{H}_{\Om} \left( \m \right) & \eqs \frac{1}{2} \sum_{i = 1}^3
  \int_{\Om} \left| \partial_i \m (x) + \Mop \left( \m (x) \right) \, \Jop \,
  \tmmathbf{e}_i \right|^2 - \frac{1}{2} \sum_{i = 1}^3 \int_{\Om} \left| \,
  \Mop \left( \m (x) \right) \, \Jop \tmmathbf{e}_i \right|^2 \mathd x
  \nonumber\\
  & =  \frac{1}{2} \int_{\Om} \left| D \m (x) + \Mop \left( \m (x) \right)
  \, \Jop \right|^2 - \frac{1}{2} \int_{\Om} \left| \, \Mop \left( \m (x)
  \right)  \Jop \right|^2 \mathd x \nonumber\\
  & \eqs  \frac{1}{2} \int_{\Om} \left| D \m (x) + \Aop \left( \m (x)
  \right) \right|^2 - \frac{1}{2} \int_{\Om} \left| \, \Aop \left( \m (x)
  \right) \right|^2 \mathd x,  \label{eq:mainfunctionalinterest}
\end{align}
where we denoted by $\Aop \left( \m \right) \assign \Mop \left( \m \right)
\Jop$ the product of the $\m$-dependent antisymmetric linear operator $\Aop$
with the constant matrix $\Jop$.

It is worth noting that the second term in \eqref{eq:mainfunctionalinterest}
reflects a $\Gamma$-continuous term whose analysis in the thin-film regime is
straightforward. A similar remark applies to the magnetostatic self-energy
$\mathcal{W} $ investigated in {\cite{Di_Fratta_2019_var,Di_Fratta_2020}},
where it is shown that in the curved thin-film limit, the magnetostatic
self-energy $\mathcal{W}$ localizes to an easy-surface anisotropy in the
direction of the normal to the base manifold. Overall, it is sufficient to
study energy \eqref{eq:mainenfunc} to comprehend how DMI affects magnetization
behavior in curved thin layers.

\subsection{State of the art}\label{subsec:stateofart}The investigations of
Dirichlet-type energy functionals between manifolds is a subject with a long
history. Their minimization problem naturally arises in differential geometry
for studying unit-speed geodesics and minimal
surfaces~{\cite{Eells1995,Schoen1997}}. However, it also appears in nonlinear
field theories where certain forms of interactions are assimilated through the
ideas of elastic deformations. Outstanding examples are the elastic free
energy in the Oseen-Frank theory of nematic liquid crystals
{\cite{Ball_2017,Virga_2018}}, the exchange energy in the variational theory
of micromagnetism~{\cite{BrownB1963,Hubert1998}}, and string theory and
$M$-theory~{\cite{Albeverio_1997,Becker_2006}}. The first-order minimality
conditions of Dirichlet energy produce harmonic map equations, whose
systematic treatment gained impetus after the seminal work of Eells and
Sampson {\cite{Eells1964}}, who showed that in specific geometric contexts,
arbitrary maps could be deformed into harmonic maps. The overall subject is so
vast that it would be impossible here even to scratch the surface of the
literature on the topic; therefore, we refer to
{\cite{Eells1995,Schoen1997,Lin2008}} and references therein for further
details.

Here, instead, we give an overview of the literature intimately related to our
investigations in the curved thin film regime. In the variational theory of
micromagnetics, which consists of a nonlocal addendum to the theory of
$\Stwo^2$-valued harmonic maps, the story dates back to the seminal paper
{\cite{GioiaJames97}}, where the authors showed that in thin films, the
nonlocal effects of the so-called demagnetizing field operator localize to an
easy-surface anisotropy term. Later, various static and dynamic reduced
theories for thin-film micromagnetics has been established under different
scaling regimes
in~{\cite{Babadjian_2022,DeSimoneKohnMuellerOtto02,KnuepferMuratovNolte19,KohnSlastikov05,KohnSlastikov-dyn05,Melcher10,Melcher14, Moser04,Moser05}}.

Ferromagnetic systems in the shape of {\tmem{curved}} thin films are
currently of interest due to their capability to host {\tmem{spontaneous}}
skyrmion solutions, i.e.,
chiral spin textures that carry a non-trivial topological degree, even in
materials where the antisymmetric spin-orbit coupling mechanism (in the guise
of DMI) can be neglected. The evidence of these states sheds light on the role
of the geometry in magnetism: chiral spin-textures can be stabilized by
curvature effects only, in contrast to the planar case where DMI is required.
In addition to fundamental reasons, the interest in these geometries is
triggered by recent advances in the fabrication of magnetic spherical hollow
nanoparticles, which lead to artificial materials with unexpected
characteristics and numerous applications ranging from logic devices to
biomedicine~(see, e.g., the topical review {\cite{Streubel_2016}} and \cite{MaKSheka2022}). From
the mathematical point of view, the dimension reduction problems in non-planar
thin magnetic layers have been studied
in~{\cite{carbou2001thin,Di_Fratta_2019, Di_Fratta_2019_var, Di_Fratta_2020, Melcher_2019, MoriniSlastikov18,Slastikov05}}.

Dimension reduction results taking into account also antisymmetric exchange
interactions have been reported in the regime of planar thin film and for bulk
DMI, i.e., when in \eqref{eq:mainfunctionalinterest0} one considers
$\Om_{\varepsilon} \subseteq \RR^3$ of the type $\Om_{\varepsilon} \assign
\omega \times (- \varepsilon, \varepsilon)$ with $\omega$ an open subset of
$\RR^2$, and the classical $\Stwo^2$-valued magnetization $\m \in H^1(
\Om_{\varepsilon}, \Stwo^2)$, see {\cite[Theorem~1]{Davoli2022}}. The
surprising consequence of this result is that a portion of the bulk DMI energy
contributes an additional shape anisotropy term \ $\kappa^2  \int_{\omega} (\m
(\sigma) \cdot \tmmathbf{e}_3)^2$ \ in the limiting energy, to enhance the
shape anisotropy of the thin film, see
Subsection~\ref{subsec:isotropicbulkDMI} for extended discussion.

In this paper, we remove the rigid conditions of $\m$ being $\Stwo^2$-valued,
the thin film being planar, and treat a general perturbation $\Aop$. It allows
us to treat an interesting case of anisotropic
DMI~{\cite{Camosi_2017,Ga_2022}} and other engaging physical scenarios, such
as constrained $Q$-tensor theories of nematic liquid crystals as well their
Oseen-Frank counterpart in the presence of anisotropic bulk
potentials~{\cite{Ball_2017}}. It can also be applied to various limiting
Ginzburg--Landau-type models~{\cite{Bethuel_2017}}. \

An important contribution of our result to the physics of magnetic materials
comes from the fact that it uncovers yet another effective anisotropy term
(apart from one reported in {\cite{Davoli2022}}) coming from the interplay
between a ferromagnetic layer, target manifold for magnetization vector, and a
specific form of DMI. It reveals, for example, what happens when one considers
a ferromagnet in which the temperature is not necessarily uniform or DMI is
anisotropic. We refer to Section~\ref{sec:applicationsMicromag} for various
interesting examples in micromagnetics.

\section{Contributions of the present work}\label{subsec:rewriteG}

\subsection{Notation and setup}The main result of this paper concerns the
variational characterization of the asymptotic behavior of a rescaled version
of \eqref{eq:mainenfunc} in the limit for $\varepsilon \rightarrow 0$. To
state our results in a precise way, we need to set up the framework and
introduce notation.

We denote by $\N$ and $\jM$ closed (i.e., compact, without boundary, and
connected) hypersurfaces of class $C^2$, respectively, of $\RR^n$ and $\RR^m$.
It is well-known that as a consequence of the Jordan-Brouwer separation
theorem, $\N$ and $\jM$ are orientable~{\cite{Lima1988}}. The normal fields
associated with the choice of orientations of $\N$ and $\jM$ will be denoted,
respectively, by $\tmmathbf{n}_{\N} : \N \rightarrow \Stwo^{n - 1}$, and
$\tmmathbf{n}_{\jM} : \jM \rightarrow \Stwo^{m - 1}$.

To make the reading more comfortable, we consistently denote by $\xi$ a
generic point on $\N$ and by $\sigma$ a generic point on the target space
$\jM$. Also, we denote by $T_{\xi} \N$ and $T_{\sigma} \jM$, respectively, the
tangent space to $\N$ at $\xi$ and the tangent space to $\jM$ at $\sigma$, and
we use the notation $T \N$ and $T \jM$ to denote the corresponding tangent
bundles.

Our hypotheses on $\N$ and $\jM$ assure that they both admit a tubular
neighborhood (of uniform thickness). We recall the definition as it also
allows us to fix notation. Given a closed $C^2$-hypersurface $S \subseteq
\RR^n$ (for which Jordan-Brouwer separation theorem holds), we can denote by
$d_S : \RR^n \rightarrow \RR$ the {\tmem{signed distance}} from $S$, defined
by
\[ d_S (x) = \left\{ \begin{array}{ll}
     d (x, S) & \text{if } x \in S_+,\\
     - d (x, S) & \text{if } x \in S_-,
   \end{array} \right. \]
where we denoted by $d (x, S)$ the {\tmem{Euclidean}} {\tmem{distance}} of $x$
from $S$, by $S_+$ the outer (unbounded) component of $\RR^n \setminus S$, and
by $S_-$ the interior one. We say that the open set
\[ O_{\delta} \assign \left\{ x \in \RR^n : d_S (x) < \delta \right\} \]
is a {\tmem{tubular neighborhood}} of $S$ of uniform thickness $\delta > 0$ if
the following property holds ({\tmabbr{cf.}}~{\cite{Carmo2016}}): with
$\mathcal{S} \assign S \times I$, $I \assign (- 1, 1)$, one has that for every
$0 < \varepsilon < \delta$, the map
\begin{equation}
  \psi_{\varepsilon} : (\xi, \sss) \in \mathcal{S} \mapsto \xi +
  \varepsilon \sss \tmmathbf{n}_S (\xi) \in O_{\varepsilon}
  \label{eq:diffeomorphism}
\end{equation}
is a $C^1$-diffeomorphism of $\mathcal{S}$ onto $O_{\varepsilon}$. In this
case, the nearest point projection map
\begin{equation}
  \pi_S : O_{\varepsilon} \rightarrow S, \label{eq:npprojection}
\end{equation}
which maps any $x \in O_{\varepsilon}$ onto the unique $\pi_S (x) \in S$ such
that $x = \pi_S (x) + d_S (x) \tmmathbf{n}_S (\pi_S (x))$, is a map of class
$C^1$ and, therefore, so is $d_S (x) = (x - \pi_S (x)) \cdot \tmmathbf{n}_S
(\pi_S (x))$. Moreover, one has
\[ \nabla d_S (\pi_S (x)) =\tmmathbf{n}_S (\pi_S (x)) . \]
In what follows, we denote by $\Om_{\delta} \assign \left\{ x \in \RR^n
\suchthat d_{\N} (x) < \delta \right\}$ a tubular neighborhood of $\N$ of
thickness $\delta$ and by $O_{\delta}$ a tubular neighborhood of $\jM$. Also,
we set $\calN \assign \N \times I$, so that, from \eqref{eq:diffeomorphism},
$\Om_{\delta} \equiv \psi_{\delta} \left( \calN \right)$. Moreover, to shorten
the notation, for any $\delta > 0$ we set $I_{\delta} \assign (- \delta,
\delta)$ and $I_{\delta_{+}} \assign (0, \delta)$.

For every $\xi \in \N$ the symbols $\tmt_1 (\xi), \tmt_2 (\xi), \ldots,
\tmt_{n - 1} (\xi)$ are used to denote an orthonormal basis of $T_{\xi} \N$
made by its principal directions, i.e., an orthonormal basis consisting of
eigenvectors of the shape operator of $\N$ ({\tmabbr{cf.}},
e.g.,~{\cite{Carmo2016}}). We then write $\kappa_1 (\xi), \kappa_2 (\xi),
\ldots, \kappa_{n - 1} (\xi)$ for the principal curvatures at $\xi \in \N$.
Note that, for any $x \in \Om_{\delta}$ the frame
\begin{equation}
  \left( \tmt_1 (\xi), \tmt_2 (\xi), \ldots, \tmt_{n - 1} (\xi),
  \tmmathbf{n}_{\N} (\xi) \right) \quad \text{with} \quad \xi \assign \pi_{\N}
  (x) \;, \label{eq:convtrihedron}
\end{equation}
constitutes an orthonormal basis of $T_{\pi_{\N} (x)} \Om_{\delta}$ that
depends only on $\N$. Note that we do not specify the symbol $\N$ in the
notation related to the orthonormal basis of $T_{\xi} \N$. This creates no
confusion because we will not use orthonormal bases of $T_{\sigma} \jM$.

For $0 < \varepsilon < \delta$, we denote by
$\sqrt{\mathfrak{g}_{\varepsilon} (\xi, \sss)}$ the metric factor
which relates the volume form on $\Om_{\varepsilon}$ at $\xi + \varepsilon
\sss \tmmathbf{n}_{\N} (\xi)$ to the volume form on $\calN$ at $\left( \xi,
\sss \right)$. Also, we denote by $(\mathfrak{h}_{1, \varepsilon} \left( \xi,
\sss \right))_{i=1}^{n-1}$ the metric
coefficients which connect the tangential part of the gradient at $\xi +
\varepsilon \sss \tmmathbf{n}_{\N} (\xi) \in \Om_{\varepsilon}$ to the
tangential gradient on $\N$ at $(\xi, \sss)$.

A direct computation (see Lemma~\ref{eq:lemmametricfactor} in
Section~\ref{sec:proofmain}) shows that
\begin{equation}
  \sqrt{\mathfrak{g}_{\varepsilon} (\xi, \sss)} \assign \Pi_{i =
  1}^{n - 1} (1 + \varepsilon s \kappa_i (\xi)), \quad \mathfrak{h}_{i,
  \varepsilon} (\xi, s) \assign \frac{1}{1 + \varepsilon \sss \kappa_i (\xi)}
  \quad (i = 1, 2, \ldots, n - 1) .
\end{equation}
In what follows, without loss of generality, we shall always assume that the
thickness $\delta$ is sufficiently small so that for every $\varepsilon \in
I_{\delta+}$ there holds
\begin{equation}
  c_{\calN}^{- 1} \leqslant \sqrt{\mathfrak{g}_{\varepsilon} \left( \xi, \sss
  \right)} \leqslant c_{\calN}, \quad c_{\calN}^{- 1} \leqslant
  \mathfrak{h}_{i, \varepsilon} (\xi, \sss) \leqslant c_{\calN}
  \label{eqs:lowbounds},
\end{equation}
for some positive constant $c_{\calN} > 0$.

Also, we shall denote by $H^1( \calN, \RR^m)$ the Sobolev space
of vector-valued functions defined on $\calN$ (see, e.g., {\cite{Wloka1995}})
endowed with the norm
\begin{equation}
  \left\| \uu \right\|^2_{H^1( \calN, \jM)} \assign \int_{\calN}
  \left| \uu (\xi, \sss) \right|^2 \mathd \xi \mathd \sss +
  \int_{\calN} \left| \grad_{\xi}  \uu (\xi, \sss) \right|^2 +
  \left| \partial_{\sss} \uu (\xi, \sss) \right|^2 \mathd \xi
  \mathd \sss .
\end{equation}
Here, $\grad_{\xi}$ is the tangential gradient of $\uu$ on $\N$, and $\left|
\grad_{\xi}  \uu (\xi, \sss) \right|^2 = \sum_{i = 1}^{n - 1}
\left| \partial_{\tmmathbf{\tau}_i (\xi)} \uu (\xi, s) \right|^2$.

We write $H^1( \calN, \jM)$ for the subset of $H^1( \calN,
\RR^m)$ made by vector-valued functions with values in $\jM$, and we
use the notation $H^1( \calN, T \jM)$ when we want to emphasize
that the target manifold is the tangent bundle of $\jM$.

\subsection{The chiral Dirichlet energy}Let $\N$ and $\jM$ be closed and
smooth ($C^2$) hypersurfaces of $\RR^n$ and $\RR^m$, respectively. For every
$0 < \varepsilon < \delta$ we consider the ($n$-dimensional) tubular
neighborhood $\Om_{\varepsilon} = \left\{ x \in \RR^n \suchthat d_{\N} (x) <
\varepsilon \right\}$, and consider the family of energy functionals defined
for every $\vv \in H^1( \Om_{\varepsilon}, \jM)$ by
\begin{equation}
  \mathcal{G}_{\varepsilon} \left( \vv \right) = \frac{1}{2 \varepsilon}
  \int_{\Om_{\varepsilon}} \left| D \vv (x) + \Aop \left( \vv (x) \right)
  \right|^2 \mathd x, \label{eq:enfun}
\end{equation}
with $| \cdot |$ being the Euclidean norm on $\RR^{m \times n}$ ($m$ rows, $n$
columns) and where
\[ \Aop : \sigma \in \jM \mapsto \Aop (\sigma) \in \RR^{m \times n} \]
is a Lipschitz continuous function, i.e., there exists $c_{\Aop} > 0$ such
that
\begin{equation}
  \left| \Aop (\sigma_1) - \Aop (\sigma_2) \right|_{n \times m} \leqslant
  c_{\Aop} | \sigma_1 - \sigma_2 |_m \quad \forall \sigma_1, \sigma_2 \in \jM
  \label{eq:cALip} .
\end{equation}
\begin{remark}
  Since we are assuming $\jM$ to be bounded, \eqref{eq:cALip} implies that for
  any $\sigma_0 \in \jM$ there holds $\left| \Aop (\sigma) \right| \leqslant
  \left| \Aop (\sigma_0) \right| + c_{\Aop} \tmop{diam}( \jM)$,
  i.e., that the image of $\Aop$ is bounded. In what follows, to simplify the
  constants, we will assume that a Lipschitz constant $c_{\Aop}$ is chosen big
  enough so that there holds
  \begin{equation}
    \left| \Aop (\sigma) \right| \leqslant c_{\Aop} \quad \forall \sigma \in
    \jM \label{eq:cALipnew} .
  \end{equation}
\end{remark}

For any $\varepsilon \in I_{\delta+}$, the existence of at least a minimizer
for $\mathcal{G}_{\varepsilon}$ in $H^1( \Om_{\varepsilon}, \jM)$
is a simple application of the direct method of the calculus of variations. We
are interested in the asymptotic behavior of the family of minimizers of
$(\mathcal{G}_{\varepsilon})_{\varepsilon \in I_{\delta+}}$ as $\varepsilon
\rightarrow 0$.{\medskip}

Let us introduce the following functional defined on $H^1( \calN, \jM)$, which can be thought of as the pull-back of
$\mathcal{G}_{\varepsilon}$ on the product manifold $\calN \assign \N \times
I$, $I \assign (- 1, 1)$:
\begin{align}
  \mathcal{E}_{\calN}^{\varepsilon} \left( \uu_{\varepsilon} \right) & \assign
   \frac{1}{2} \int_{\calN} \sum_{i = 1}^{n - 1} \left| \mathfrak{h}_{i,
  \varepsilon} (\xi, \sss) \partial_{\tmt_i (\xi)}
  \uu_{\varepsilon} (\xi, \sss) + \Aop \left( \uu_{\varepsilon}
  (\xi, \sss) \right) \bp \tmt_i (\xi) \right|^2 
  \sqrt{\mathfrak{g}_{\varepsilon} (\xi, \sss)} \mathd \xi \mathd
  \sss \nonumber\\
  &   \qquad \qquad + \frac{1}{2} {\int_{\calN}}_{} \left| \frac{1}{\varepsilon}
  \partial_{\sss} \uu_{\varepsilon} (\xi, \sss) + \Aop \left(
  \uu_{\varepsilon} (\xi, \sss) \right) \bp \tmmathbf{n}_{\N}
  (\xi) \right|^2 \sqrt{\mathfrak{g}_{\varepsilon} (\xi, \sss)}
  \mathd \xi \mathd \sss .  \label{eq:rewriteG1}
\end{align}
Our main result is stated in the following statement.

\begin{theorem}
  \label{thm:main}For any $\varepsilon \in I_{\delta+}$, the minimization
  problem for $\mathcal{G}_{\varepsilon}$ in $H^1( \Om_{\varepsilon},
  \jM)$ is equivalent to the minimization in $H^1( \calN, \jM)$ of the functional $\mathcal{E}_{\calN}^{\varepsilon}$ defined by
  {\tmem{\eqref{eq:rewriteG1}}} in the sense that a configuration
  $\vv_{\varepsilon} \in H^1( \Om_{\varepsilon}, \jM)$ minimizes
  $\mathcal{G}_{\varepsilon}$ if and only if $\uu_{\varepsilon} \assign
  \vv_{\varepsilon} \circ \psi_{\varepsilon} \in H^1( \calN, \jM)$ minimizes $\mathcal{E}_{\calN}^{\varepsilon}$.
  
  The family $( \mathcal{E}_{\calN}^{\varepsilon} )_{\varepsilon
  \in I_{\delta+}}$ is equicoercive in the weak topology of $H^1(
  \calN, \jM)$ and the $\Gamma$-limit $\mathcal{E}_{\calN} \assign
  \Gamma \text{-} \lim_{\varepsilon \rightarrow 0}
  \mathcal{E}_{\calN}^{\varepsilon}$ is defined for every $\uu \in H^1(
  \calN, \jM)$ by
  \begin{equation}
    \mathcal{E}_{\calN} (\uu) = \left\{ \begin{array}{ll}\displaystyle
      \sum_{i = 1}^{n - 1} \int_{\N} \left| \partial_{\tmmathbf{\tau}_i (\xi)}
      \uu (\xi) + \Aop (\uu(\xi)) \bp \tmt_i (\xi) \right|^2  &
      \\
     \displaystyle \qquad \qquad + \int_{\N} \left( \Aop \left( \uu (\xi) \right)
      \tmmathbf{n}_{\N} (\xi) \cdot \tmmathbf{n}_{\jM} \left( \uu (\xi)
      \right) \right)^2 \mathd \xi & \text{if } \partial_{\sss} \uu = 0,\\
     \displaystyle + \infty & \text{otherwise} .
    \end{array} \right. \label{eq:gleF}
  \end{equation}
  Moreover,
  \begin{equation}
    \min_{H^1( \Om_{\varepsilon}, \jM)} \mathcal{G}_{\varepsilon}
   \, =\, \min_{H^1( \calN, \jM)} \mathcal{E}_{\calN}^{\varepsilon}
    \, = \, \min_{H^1( \calN, \jM)}  \mathcal{E}_{\calN} + o (1)\, ,
    \label{eq:firstordergammadevelopforF}
  \end{equation}
  and if $( \uu_{\varepsilon})_{\varepsilon \in I_{\delta+}}$ is
  a minimizing family for $( \mathcal{E}_{\calN}^{\varepsilon}
  )_{\varepsilon \in I_{\delta+}}$ then $( \uu_{\varepsilon}
  )_{\varepsilon \in I_{\delta+}}$ converges, strongly in $H^1(
  \calN, \jM)$, to a minimizer of $\mathcal{E}_{\calN}$.
\end{theorem}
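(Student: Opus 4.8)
\emph{Change of variables and equicoercivity.} Following the standard scheme for $\Gamma$-convergence of dimension-reduction problems, I would first pull \eqref{eq:enfun} back through $\psi_\varepsilon$, using the metric factors of Lemma~\ref{eq:lemmametricfactor} together with the fact that $(\tmt_1(\xi),\ldots,\tmt_{n-1}(\xi),\tmmathbf{n}_\N(\xi))$ is orthonormal---so the Frobenius norm $|\mathD\vv+\Aop(\vv)|^2$ splits over this frame into the tangential columns $\partial_{\tmt_i(\xi)}\vv+\Aop(\vv)\tmt_i(\xi)$ and the normal column $\partial_{\tmmathbf{n}_\N(\xi)}\vv+\Aop(\vv)\tmmathbf{n}_\N(\xi)$---to obtain $\mathcal{G}_\varepsilon(\vv_\varepsilon)=\mathcal{E}_\calN^\varepsilon(\vv_\varepsilon\circ\psi_\varepsilon)$ exactly as in \eqref{eq:rewriteG1}; since $\psi_\varepsilon$ is a bi-Lipschitz $C^1$-diffeomorphism it induces a bijection of $H^1(\Om_\varepsilon,\jM)$ onto $H^1(\calN,\jM)$, so minimizers correspond to minimizers. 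If $\mathcal{E}_\calN^\varepsilon(\uu_\varepsilon)\leqslant C$ uniformly, then \eqref{eq:cALipnew}, the bounds \eqref{eqs:lowbounds}, and $|a+b|^2\geqslant\tfrac12|a|^2-|b|^2$ give a uniform $L^2(\calN)$ bound on $\grad_\xi\uu_\varepsilon$ and the sharper $\|\partial_\sss\uu_\varepsilon\|_{L^2(\calN)}^2\leqslant C\varepsilon^2$; with $|\uu_\varepsilon|\leqslant\tmop{diam}(\jM)$ this bounds $(\uu_\varepsilon)$ in $H^1(\calN,\RR^m)$, so by compact embedding a subsequence converges weakly in $H^1$, strongly in $L^2$, and a.e., and its limit lies in $H^1(\calN,\jM)$ with $\partial_\sss\uu=0$ (because $\partial_\sss\uu_\varepsilon\to0$ in $L^2$). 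This is the equicoercivity in the weak $H^1$ topology, which is metrizable on the bounded sets we use.

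\emph{The $\Gamma$-liminf inequality.} Let $\uu_\varepsilon\rightharpoonup\uu$ in $H^1(\calN,\jM)$ with $\liminf\mathcal{E}_\calN^\varepsilon(\uu_\varepsilon)<\infty$; as above $\partial_\sss\uu=0$. In the tangential term I write $\mathfrak{h}_{i,\varepsilon}\partial_{\tmt_i(\xi)}\uu_\varepsilon=\partial_{\tmt_i(\xi)}\uu_\varepsilon+(\mathfrak{h}_{i,\varepsilon}-1)\partial_{\tmt_i(\xi)}\uu_\varepsilon$, the correction being $O(\varepsilon)$ in $L^2$; since $\Aop(\uu_\varepsilon)\to\Aop(\uu)$ in $L^2$ (Lipschitz continuity of $\Aop$ and $L^2$-convergence of $\uu_\varepsilon$) and $\sqrt{\mathfrak{g}_\varepsilon}\to1$ uniformly, weak lower semicontinuity of the $L^2$-norm yields $\liminf$ of this term $\geqslant\sum_i\int_\N|\partial_{\tmt_i(\xi)}\uu+\Aop(\uu)\tmt_i(\xi)|^2$. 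For the normal term there is no $L^2$-control of $\varepsilon^{-1}\partial_\sss\uu_\varepsilon$, so I use the constraint instead: $\uu_\varepsilon\in\jM$ forces $\partial_\sss\uu_\varepsilon\in T_{\uu_\varepsilon}\jM$, i.e.\ $\partial_\sss\uu_\varepsilon\cdot\tmmathbf{n}_\jM(\uu_\varepsilon)=0$, hence $|\varepsilon^{-1}\partial_\sss\uu_\varepsilon+\Aop(\uu_\varepsilon)\tmmathbf{n}_\N|^2\geqslant(\Aop(\uu_\varepsilon)\tmmathbf{n}_\N\cdot\tmmathbf{n}_\jM(\uu_\varepsilon))^2$, and since $\sigma\mapsto\tmmathbf{n}_\jM(\sigma)$ is Lipschitz (as $\jM$ is $C^2$) the right-hand side converges in $L^1(\calN)$ to $(\Aop(\uu)\tmmathbf{n}_\N\cdot\tmmathbf{n}_\jM(\uu))^2$, which, integrated in $\sss\in I$, is exactly the anisotropy appearing in \eqref{eq:gleF}. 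Adding the two bounds gives $\liminf\mathcal{E}_\calN^\varepsilon(\uu_\varepsilon)\geqslant\mathcal{E}_\calN(\uu)$.

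\emph{The recovery sequence.} This is the crux, and the naive choice $\uu_\varepsilon\equiv\uu$ fails, since it would leave the whole $|\Aop(\uu)\tmmathbf{n}_\N|^2$ rather than only its $\tmmathbf{n}_\jM$-component; the $\sss$-direction has to be exploited to cancel the part of $\Aop(\uu)\tmmathbf{n}_\N$ tangent to $\jM$. Given $\uu\in H^1(\calN,\jM)$ with $\partial_\sss\uu=0$, I set $\uu_\varepsilon(\xi,\sss)\assign\pi_\jM\big(\uu(\xi)-\varepsilon\sss\,\Aop(\uu(\xi))\tmmathbf{n}_\N(\xi)\big)$; for $\varepsilon$ small the argument stays in the tubular neighborhood of $\jM$, so $\uu_\varepsilon$ is well defined, lies in $H^1(\calN,\jM)$, and converges strongly in $H^1(\calN,\jM)$ to $\uu$. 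Because $\mathD\pi_\jM(\sigma)$ is the orthogonal projection $P_{T_\sigma\jM}$ for $\sigma\in\jM$, the chain rule gives $\varepsilon^{-1}\partial_\sss\uu_\varepsilon\to-\,P_{T_\uu\jM}\big(\Aop(\uu)\tmmathbf{n}_\N\big)$ in $L^2$, hence $\varepsilon^{-1}\partial_\sss\uu_\varepsilon+\Aop(\uu_\varepsilon)\tmmathbf{n}_\N\to\big(\Aop(\uu)\tmmathbf{n}_\N\cdot\tmmathbf{n}_\jM(\uu)\big)\tmmathbf{n}_\jM(\uu)$ in $L^2$; the tangential term converges to $\sum_i\int_\N|\partial_{\tmt_i(\xi)}\uu+\Aop(\uu)\tmt_i(\xi)|^2$ as before, so altogether $\mathcal{E}_\calN^\varepsilon(\uu_\varepsilon)\to\mathcal{E}_\calN(\uu)$, which is the $\limsup$ inequality.

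\emph{Conclusion and main difficulty.} Equicoercivity and the $\Gamma$-convergence just established give, via the fundamental theorem of $\Gamma$-convergence, the identities \eqref{eq:firstordergammadevelopforF} and that (up to subsequences) a minimizing family for $(\mathcal{E}_\calN^\varepsilon)$ converges weakly in $H^1(\calN,\jM)$ to a minimizer of $\mathcal{E}_\calN$, along which $\mathcal{E}_\calN^\varepsilon(\uu_\varepsilon)\to\mathcal{E}_\calN(\uu)$. To upgrade to strong $H^1$-convergence, I note that along such a family the two summands of $\mathcal{E}_\calN^\varepsilon$ are each weakly lower semicontinuous (by the liminf argument above) while their sum tends to $\mathcal{E}_\calN(\uu)$, so each summand converges to the corresponding summand of $\mathcal{E}_\calN(\uu)$; from convergence of the tangential summand and $\Aop(\uu_\varepsilon)\to\Aop(\uu)$ in $L^2$ one gets $\|\partial_{\tmt_i(\xi)}\uu_\varepsilon+\Aop(\uu)\tmt_i(\xi)\|_{L^2}\to\|\partial_{\tmt_i(\xi)}\uu+\Aop(\uu)\tmt_i(\xi)\|_{L^2}$, which together with the weak convergence forces strong $L^2$-convergence of $\grad_\xi\uu_\varepsilon$; since moreover $\partial_\sss\uu_\varepsilon\to0$ and $\uu_\varepsilon\to\uu$ in $L^2$, we conclude $\uu_\varepsilon\to\uu$ strongly in $H^1(\calN,\jM)$. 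The main obstacle is the recovery step: one must recognize that the optimal $\sss$-profile has to move along $\jM$ opposite to the $\jM$-tangential component of $\Aop(\uu)\tmmathbf{n}_\N$---precisely the mechanism that produces the unexpected anisotropy $(\Aop(\uu)\tmmathbf{n}_\N\cdot\tmmathbf{n}_\jM(\uu))^2$---while the dual orthogonality $\partial_\sss\uu_\varepsilon\perp\tmmathbf{n}_\jM(\uu_\varepsilon)$ is what makes the matching liminf bound survive the absence of any a priori control on $\varepsilon^{-1}\partial_\sss\uu_\varepsilon$; tracking the $\varepsilon$-dependent metric coefficients and the $\uu$-dependent coefficient $\Aop(\uu)$ through the limits is routine.
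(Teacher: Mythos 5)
Your proposal is correct and follows the paper's overall scheme (pull-back via $\psi_{\varepsilon}$ and the metric-factor lemma, equicoercivity from Young's inequality plus the $L^{\infty}$ bound coming from compactness of $\jM$, $\Gamma$-liminf/limsup, and strong convergence of minimizers via convergence of norms), but the way you produce the anisotropy term in the liminf is genuinely different from the paper's. The paper first extracts a weak $L^2$ limit $\tmmathbf{d}_0$ of $\varepsilon^{-1}\partial_{\sss}\uu_{\varepsilon}$, proves that $\tmmathbf{d}_0$ is tangent to $\jM$ at $\uu_0$, and then minimizes $\left|\tmmathbf{d}_0+\Aop(\uu_0)\tmmathbf{n}_{\N}\right|^2$ pointwise over tangent vectors; this constrained minimization both yields the value $\left(\Aop(\uu_0)\tmmathbf{n}_{\N}\cdot\tmmathbf{n}_{\jM}(\uu_0)\right)^2$ and, as a byproduct, identifies the optimal profile $\tmmathbf{d}_0=\left(\tmmathbf{n}_{\jM}(\uu_0)\otimes\tmmathbf{n}_{\jM}(\uu_0)-\mathsf{I}\right)\Aop(\uu_0)\tmmathbf{n}_{\N}$ that is then fed into the recovery sequence $\pi_{\jM}(\uu_0+\varepsilon\sss\,\tmmathbf{d}_0)$. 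You instead exploit the tangency constraint $\partial_{\sss}\uu_{\varepsilon}\cdot\tmmathbf{n}_{\jM}(\uu_{\varepsilon})=0$ at the $\varepsilon$-level (the same identity the paper uses, but before passing to the limit), bound the normal term pointwise by its $\tmmathbf{n}_{\jM}(\uu_{\varepsilon})$-component via Cauchy--Schwarz, and pass to the limit by dominated convergence; this avoids introducing $\tmmathbf{d}_0$ and the constrained Euler--Lagrange step altogether, at the price of having to guess the recovery direction rather than deriving it. Your recovery sequence $\pi_{\jM}\left(\uu-\varepsilon\sss\,\Aop(\uu)\tmmathbf{n}_{\N}\right)$ also differs superficially from the paper's, since you do not project $\Aop(\uu)\tmmathbf{n}_{\N}$ onto $T_{\uu}\jM$ first; this is harmless because $\mathD\pi_{\jM}$ on $\jM$ is exactly the tangential projection, so $\varepsilon^{-1}\partial_{\sss}\uu_{\varepsilon}$ has the same $L^2$-limit as with the paper's choice, and the limsup closes. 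The remaining steps (equicoercivity, fundamental theorem of $\Gamma$-convergence, and the upgrade to strong $H^1$-convergence by splitting the energy into its two weakly lower semicontinuous summands and converting norm convergence plus weak convergence into strong convergence) coincide with the paper's argument in Subsections 3.2 and 3.4.
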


\begin{remark}
  \label{rmk:curvinterf}The arguments we present to prove
  Theorem~\ref{thm:main} extend with minor modifications to the analysis of
  chiral Dirichlet energies of the form
  \begin{equation}
    \tilde{\mathcal{G}}_{\varepsilon} \left( \vv \right) \assign \frac{1}{2
    \varepsilon} \int_{\Om_{\varepsilon}} \left| A (x) D \vv (x) + \Aop \left(
    x, \vv (x) \right) \right|^2 \mathd x, \label{eq:mainenfuncgen}
  \end{equation}
  in which the operator $\Aop$ in \eqref{eq:enfun} depends on $x \in
  \Om_{\varepsilon}$ as well as $\sigma \in \jM$, \eqref{eq:cALip} and
  \eqref{eq:cALipnew} are assumed to hold uniformly in $x \in
  \Om_{\varepsilon}$, and with the tensor $A \in L^{\infty} \left(
  \Om_{\varepsilon}, \left. \RR^{m \times m} \right) \right)$ uniformly
  elliptic, i.e., such that for every $x \in \Omega_\varepsilon$ and every $y \in \RR^m
  \setminus \{ 0 \}$, there holds
  \begin{equation}
    \Lambda_A  | y |^2 \geqslant A (x) y \cdot y \geqslant \lambda_A \cdot | y
    |^2,
  \end{equation}
  for positive constants $\Lambda_A, \lambda_A > 0$ that do not depend on $x
  \in \Om_{\varepsilon}$. For that, one also assumes that both $\Aop \left(
  \cdot, \vv \right)$ and $A (\cdot)$ are uniformly continuous in the normal
  direction, i.e., in terms of a modulus of continuity $\varpi_{\Aop}$ and
  $\varpi_A$, such that $\left| \Aop (x, \sigma) - \Aop \left( \pi_{\N} (x),
  \sigma \right) \right| \leqslant \varpi_\Aop \left( \left| x - \pi_{\N} (x)
  \right| \right)$ and $\left| A (x) - A \left( \pi_{\N} (x) \right) \right|
  \leqslant \varpi_A \left( \left| x - \pi_{\N} (x) \right| \right)$ for every
  $x \in \Om_{\varepsilon}$. To make the reading more comfortable, we give the
  proof by focusing on the energy $\mathcal{G}_{\varepsilon}$ in
  Theorem~\ref{thm:main}. Nevertheless, the generalized energy
  \eqref{eq:mainenfuncgen} can be important in applications; therefore, we
  give details about the curved thin-film limit in this case. In stating the
  result, we will denote by $\tilde{\mathcal{E}}_{\calN}^{\varepsilon}$ the
  pull-back of $\tilde{\mathcal{G}}_{\varepsilon}$ from $H^1(
  \Om_{\varepsilon}, \jM)$ to $H^1( \calN, \jM)$ that one
  can obtain following the same steps that led to the pull-back
  $\eqref{eq:rewriteG1}$.
  \end{remark}
  \begin{theorem}
    \label{thm:maingen}For any $\varepsilon \in I_{\delta+}$, the
    minimization problem for $\tilde{\mathcal{G}}_{\varepsilon}$ in $H^1( \Om_{\varepsilon}, \jM)$ is equivalent to the minimization
    in $H^1( \calN, \jM)$ of its pull-back
    $\tilde{\mathcal{E}}_{\calN}^{\varepsilon}$, in the sense that a
    configuration $\vv_{\varepsilon} \in H^1( \Om_{\varepsilon}, \jM)$ minimizes $\tilde{\mathcal{G}}_{\varepsilon}$ if and only if
    $\uu_{\varepsilon} \assign \vv_{\varepsilon} \circ \psi_{\varepsilon} \in
    H^1( \calN, \jM)$ minimizes
    $\tilde{\mathcal{E}}_{\calN}^{\varepsilon}$.
  
    The family $( \tilde{\mathcal{E}}_{\calN}^{\varepsilon})_{\varepsilon \in I_{\delta+}}$ is equicoercive in the weak
    topology of $H^1( \calN, \jM)$ and the $\Gamma$-limit
    $\tilde{\mathcal{E}}_{\calN} \assign \Gamma \text{-} \lim_{\varepsilon
    \rightarrow 0} \tilde{\mathcal{E}}_{\calN}^{\varepsilon}$ is defined for
    every $\uu \in H^1( \calN, \jM)$ by
    \begin{equation}
      \tilde{\mathcal{E}}_{\calN} ( \uu) = \left\{
      \begin{array}{ll}\displaystyle
        \sum_{i = 1}^{n - 1} \int_{\N} \left| A (\xi)
        \partial_{\tmmathbf{\tau}_i (\xi)} \uu (\xi) + \Aop \left( \xi, \uu
        (\xi) \right) \bp \tmt_i (\xi) \right|^2 \mathd \xi & \\
        \displaystyle \qquad \qquad + \int_{\N} \left( \frac{A^{- 1} (\xi) \Aop
        \left( \xi, \uu (\xi) \right) \tmmathbf{n}_{\N} (\xi) \cdot
        \tmmathbf{n}_{\jM} \left( \uu (\xi) \right)}{A^{- 1} (\xi)
        \tmmathbf{n}_{\jM} (\uu(\xi)) \cdot \tmmathbf{n}_{\jM}
        \left( \uu (\xi) \right)}\right)^2 \mathd \xi \quad & \text{if }
        \partial_{\sss} \uu = 0,\\
        + \infty & \text{otherwise} .
      \end{array} \right. \label{eq:gleFgen}
    \end{equation}
    Moreover, if $( \uu_{\varepsilon})_{\varepsilon \in I_{\delta
    +}}$ is a minimizing family for $(
    \tilde{\mathcal{E}}_{\calN}^{\varepsilon} )_{\varepsilon \in
    I_{\delta+}}$ then $( \uu_{\varepsilon})_{\varepsilon \in
    I_{\delta+}}$ converges, strongly in $H^1( \calN, \jM)$, to
    a minimizer of $\tilde{\mathcal{E}}_{\calN}$.
  \end{theorem}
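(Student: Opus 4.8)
The proof follows the three-step scheme used for Theorem~\ref{thm:main} in Section~\ref{sec:proofmain}, and I indicate only the changes needed to accommodate the merely bounded elliptic tensor $A$ and the $x$-dependence of $\Aop$. \emph{Step 1 (reduction to $\calN$).} For the equivalence of the two minimization problems one changes variables through the $C^{1}$-diffeomorphism $\psi_{\varepsilon}$ of \eqref{eq:diffeomorphism}, sets $\uu_{\varepsilon} \assign \vv_{\varepsilon} \circ \psi_{\varepsilon}$, and repeats the computation leading to \eqref{eq:rewriteG1}: the chain rule together with $\grad d_{\N} = \tmmathbf{n}_{\N} \circ \pi_{\N}$ writes $D \vv_{\varepsilon}$ in the moving frame \eqref{eq:convtrihedron} with normal column $\varepsilon^{-1} \partial_{\sss} \uu_{\varepsilon}$, tangential columns rescaled by the coefficients $\mathfrak{h}_{i, \varepsilon}$ of Lemma~\ref{eq:lemmametricfactor}, and volume element $\sqrt{\mathfrak{g}_{\varepsilon}}$. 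Since $A$ multiplies each column on the left while $\Aop (x, \cdot)$ acts pointwise, this gives $\tilde{\mathcal{G}}_{\varepsilon} (\vv_{\varepsilon}) = \tilde{\mathcal{E}}_{\calN}^{\varepsilon} (\uu_{\varepsilon})$; the bi-Lipschitz character of $\psi_{\varepsilon}$ and the bounds \eqref{eqs:lowbounds} yield the claimed equivalence, in particular $\vv_{\varepsilon} \in H^{1} (\Om_{\varepsilon}, \jM) \iff \uu_{\varepsilon} \in H^{1} (\calN, \jM)$.

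\emph{Step 2 (equicoercivity and $\Gamma$-liminf).} A uniform bound on $\tilde{\mathcal{E}}_{\calN}^{\varepsilon} (\uu_{\varepsilon})$, together with \eqref{eqs:lowbounds}, the ellipticity estimate $|A (x) y| \geqslant \lambda_{A} |y|$, the bound $|\Aop| \leqslant c_{\Aop}$ and the compactness of $\jM$, bounds $\uu_{\varepsilon}$ in $H^{1} (\calN, \RR^{m})$ and forces $\partial_{\sss} \uu_{\varepsilon} \to 0$ in $L^{2}$; hence, along a subsequence, $\uu_{\varepsilon} \rightharpoonup \uu$ in $H^{1}$ and strongly in $L^{2}$, with $\uu \in H^{1} (\calN, \jM)$ independent of $\sss$. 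For the liminf inequality, the normal moduli of continuity $\varpi_{A}, \varpi_{\Aop}$ let one replace $A (x)$ and $\Aop (x, \cdot)$ by their traces $A (\xi)$, $\Aop (\xi, \cdot)$ on $\N$ up to an error $o (1)$, while $\mathfrak{h}_{i, \varepsilon} \to 1$ and $\sqrt{\mathfrak{g}_{\varepsilon}} \to 1$ uniformly. The tangential contribution is weakly lower semicontinuous, since $y \mapsto |A (\xi) y|^{2}$ is a nonnegative quadratic form and $\Aop (\xi, \uu_{\varepsilon}) \, \tmt_{i} \to \Aop (\xi, \uu) \, \tmt_{i}$ strongly in $L^{2}$, and after integrating out $\sss \in I$ it converges to $\sum_{i} \int_{\N} |A (\xi) \partial_{\tmt_{i} (\xi)} \uu + \Aop (\xi, \uu) \, \tmt_{i} (\xi)|^{2} \mathd \xi$. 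For the normal contribution, set $\tmmathbf{w}_{\varepsilon} \assign \varepsilon^{-1} \partial_{\sss} \uu_{\varepsilon}$: ellipticity gives $\tmmathbf{w}_{\varepsilon} \rightharpoonup \tmmathbf{w}$ in $L^{2} (\calN, \RR^{m})$ along a further subsequence, and since $\partial_{\sss} \uu_{\varepsilon} \in T_{\uu_{\varepsilon}} \jM$ while $\tmmathbf{n}_{\jM} (\uu_{\varepsilon}) \to \tmmathbf{n}_{\jM} (\uu)$ strongly, passing to the limit in $\tmmathbf{w}_{\varepsilon} \cdot \tmmathbf{n}_{\jM} (\uu_{\varepsilon}) = 0$ shows $\tmmathbf{w} (\xi, \sss) \in T_{\uu (\xi)} \jM$. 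Weak lower semicontinuity then bounds the $\liminf$ below by $\tfrac{1}{2} \int_{\calN} |A (\xi) \tmmathbf{w} + \Aop (\xi, \uu) \tmmathbf{n}_{\N}|^{2}$, and minimizing the integrand pointwise over $\tmmathbf{w} \in T_{\uu (\xi)} \jM$ — the image $A (\xi) (T_{\uu (\xi)} \jM)$ being a hyperplane, this is a one-dimensional least-squares problem — gives, after integrating out $\sss$, the normal term of \eqref{eq:gleFgen}.

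\emph{Step 3 ($\Gamma$-limsup and convergence of minimizers).} Given $\uu \in H^{1} (\calN, \jM)$ with $\partial_{\sss} \uu = 0$, assume first $\uu$ and $A$ smooth, let $\tmmathbf{w}^{\ast} (\xi) \in T_{\uu (\xi)} \jM$ be the pointwise minimizer from Step~2, and set $\uu_{\varepsilon} (\xi, \sss) \assign \pi_{\jM} (\uu (\xi) + \varepsilon \sss \, \tmmathbf{w}^{\ast} (\xi))$, which lies in $H^{1} (\calN, \jM)$ for $\varepsilon$ small because $\jM$ admits a tubular neighborhood. As $\tmmathbf{w}^{\ast} (\xi) \in T_{\uu (\xi)} \jM$, one has $\varepsilon^{-1} \partial_{\sss} \uu_{\varepsilon} \to \tmmathbf{w}^{\ast}$ and $\grad_{\xi} \uu_{\varepsilon} \to \grad_{\xi} \uu$ strongly in $L^{2}$, hence $\tilde{\mathcal{E}}_{\calN}^{\varepsilon} (\uu_{\varepsilon}) \to \tilde{\mathcal{E}}_{\calN} (\uu)$; a density and diagonal argument — approximating $\uu$ by smooth $\jM$-valued maps and $A$ by mollifications, using \eqref{eq:cALip} and the continuous dependence of $\tmmathbf{w}^{\ast}$ on $(A (\xi), \tmmathbf{n}_{\jM} (\uu (\xi)), \Aop (\xi, \uu (\xi)))$ — removes the extra regularity. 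Thus $\tilde{\mathcal{E}}_{\calN} = \Gamma \text{-} \lim \tilde{\mathcal{E}}_{\calN}^{\varepsilon}$, and with the equicoercivity of Step~2 this gives the convergence of the minima and that every accumulation point of a minimizing family minimizes $\tilde{\mathcal{E}}_{\calN}$. Finally, along a minimizing family the liminf inequalities of Step~2 are saturated, which forces $A (\cdot) \partial_{\tmt_{i}} \uu_{\varepsilon} \to A (\cdot) \partial_{\tmt_{i}} \uu$ strongly in $L^{2}$ (weak convergence plus convergence of norms in the $L^{2}$-equivalent quadratic form $f \mapsto \int |A (\cdot) f|^{2}$); by ellipticity of $A$ this upgrades $\grad_{\xi} \uu_{\varepsilon} \rightharpoonup \grad_{\xi} \uu$ to strong convergence, and with $\partial_{\sss} \uu_{\varepsilon} \to 0 = \partial_{\sss} \uu$ one concludes $\uu_{\varepsilon} \to \uu$ strongly in $H^{1} (\calN, \jM)$.

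\emph{Expected main obstacle.} The delicate point is the recovery sequence in Step~3: because $A$ is only $L^{\infty}$ and $\uu$ only $H^{1}$, the optimal normal profile $\tmmathbf{w}^{\ast}$ — an explicit but nonlinear function of $A (\xi)$, $\tmmathbf{n}_{\jM} (\uu (\xi))$ and $\Aop (\xi, \uu (\xi))$ — need not be an admissible competitor, so one must regularize $\uu$ and $A$ simultaneously and then diagonalize while keeping the energy controlled. It is exactly because $\varepsilon^{-1} \partial_{\sss} \uu_{\varepsilon}$ converges only weakly, never strongly, that the normal contribution in \eqref{eq:gleFgen} is a genuine relaxation rather than a plain pointwise limit.
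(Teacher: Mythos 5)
Your three-step scheme is exactly the route the paper intends: Theorem~\ref{thm:maingen} is obtained (cf.\ Remark~\ref{rmk:curvinterf}) by repeating the proof of Theorem~\ref{thm:main} — pull-back via $\psi_{\varepsilon}$, equicoercivity via ellipticity and \eqref{eqs:lowbounds}, liminf via weak compactness of $\varepsilon^{-1}\partial_{\sss}\uu_{\varepsilon}$ plus pointwise minimization over tangent profiles, limsup via $\pi_{\jM}(\uu_0+\varepsilon\sss\,\tmmathbf{w}^{\ast})$, and strong convergence of minimizers from convergence of norms — with the $x$-dependence absorbed by the normal moduli of continuity. In that sense your proposal matches the paper's argument step by step.

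There is, however, a genuine gap at the single step that is new in the generalized setting, and you wave it through. In Step~2 you minimize $\left|A(\xi)\tmmathbf{w}+\Aop(\xi,\uu)\,\tmmathbf{n}_{\N}(\xi)\right|^{2}$ over $\tmmathbf{w}\in T_{\uu(\xi)}\jM$ and simply assert that this "gives the normal term of \eqref{eq:gleFgen}". Carry it out: with $b:=\Aop(\xi,\uu)\tmmathbf{n}_{\N}$ and $\nu:=\tmmathbf{n}_{\jM}(\uu)$, your own geometric description (distance from $-b$ to the hyperplane $A(T_{\uu}\jM)$, whose normal direction is $A^{-\top}\nu$) yields
\begin{equation*}
\min_{\tmmathbf{w}\cdot\nu=0}\left|A\tmmathbf{w}+b\right|^{2}
=\frac{\left(A^{-1}b\cdot\nu\right)^{2}}{\left|A^{-\top}\nu\right|^{2}},
\end{equation*}
whereas the formula in \eqref{eq:gleFgen} carries $\left(A^{-1}\nu\cdot\nu\right)^{2}$ in the denominator. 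By Cauchy--Schwarz $\left(A^{-1}\nu\cdot\nu\right)^{2}\leqslant\left|A^{-\top}\nu\right|^{2}$, with equality only when $A^{-\top}\nu$ is parallel to $\nu$ (e.g.\ $\nu$ an eigenvector of $A^{-\top}$, in particular $A(\xi)$ a scalar multiple of the identity, which is precisely the case $A=M_{s}\mathsf{I}$ arising in the paper's temperature application). The expression in \eqref{eq:gleFgen} corresponds to requiring the residual $A\tmmathbf{w}+b$ to be parallel to $\nu$ — the Euler--Lagrange condition of the $A=\mathsf{I}$ case — which is an admissible but in general suboptimal competitor. So as written your liminf and limsup bounds close up at a functional different from the stated one; you must either perform this pointwise constrained least-squares computation explicitly and identify the structural hypothesis on $A$ under which the two expressions coincide, or state the limit with the corrected normal term. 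A secondary remark: the obstacle you flag in Step~3 is milder than you fear — no mollification of $\uu$ is needed, since for $\uu\in H^{1}\cap L^{\infty}$, $\Aop$ Lipschitz and $\tmmathbf{n}_{\jM}$ of class $C^{1}$ the optimal profile lies in $H^{1}\cap L^{\infty}(\N)$ as soon as $A|_{\N}$ enjoys some tangential (e.g.\ Lipschitz) regularity; what genuinely must be added beyond "$A\in L^{\infty}$ plus normal continuity" is exactly this tangential regularity of $A$ along $\N$, or else the density-of-nice-tangent-profiles argument you sketch.
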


\begin{remark}
  \label{rmk:moregensurf}To make the analysis more pleasant to read, we
  assumed $\N$ and $\jM$ to be closed hypersurface of class $C^2$. However, as
  it will be transparent from the proofs, all the results hold as soon as $\N$
  and $\jM$ are bounded hypersurfaces (without boundary) of class $C^2$ that
  admit a tubular neighborhood (of uniform thickness). Therefore, the range of
  hypersurfaces $\N$ and $\jM$ included in our analysis is broad. Indeed,
  since any closed hypersurface of class $C^2$ admits a tubular neighborhood
  (of uniform thickness) {\cite[Prop.~1, p.~113]{Carmo2016}}, our analysis
  holds for classical convex surfaces like spheres, ellipsoids, planar
  surfaces, as well as for non-convex ones like the torus. The analysis also
  extends to the class of bounded surfaces that are diffeomorphic to an open
  subset of a compact surface; typical examples are the finite cylinder and
  the graph of a $C^2$ function.
\end{remark}

The proof of Theorem~\ref{thm:main} is subdivided into four steps. In
Subsection~\ref{susec:step1}, we show that for any $\varepsilon \in I_{\delta
+}$ and any $\vv_{\varepsilon} \in H^1( \Om_{\varepsilon}, \jM)$
the equality $\mathcal{G}_{\varepsilon} \left( \vv_{\varepsilon} \right)
=\mathcal{E}_{\calN}^{\varepsilon} \left( \vv_{\varepsilon} \circ
\psi_{\varepsilon} \right)$ holds, where $\psi_{\varepsilon}$ stands for the
diffeomorphism of $\calN$ onto $\Om_{\varepsilon}$ given by
\eqref{eq:diffeomorphism}. In Subsection \ref{subsec:equicoercivityffprime},
we show that the family $\left( \mathcal{E}_{\calN}^{\varepsilon}
\right)_{\varepsilon \in I_{\delta+}}$ is equicoercive in the weak topology
of $H^1( \calN, \jM)$. The identification of the $\Gamma$-limit
$\mathcal{E}_{\calN}$ is given in Subsection~\ref{subsec:identificationEN}.
Finally, in Section~\ref{sec:applicationsMicromag}, we consider several
instances of our main result and remark on the importance they can have in the
micromagnetic community, both as a source of new mathematical problems and new
models of interest for the physical community.

\section{The curved thin-film limit: proof of Theorem
\ref{thm:main}}\label{sec:proofmain}

\subsection{The equivalence of $\mathcal{G}_{\varepsilon}$ and
$\mathcal{E}_{\calN}^{\varepsilon}$}\label{susec:step1}In this section, we
prove the first part of Theorem~\ref{thm:main}, namely that once introduced,
for any $\varepsilon \in I_{\delta}$, the diffeomorphism of $\calN$ onto
$\Om_{\varepsilon}$ given by $\psi_{\varepsilon} : (\xi, \sss)
\in \calN \mapsto \xi + \varepsilon \sss \tmmathbf{n}_{\N} (\xi) \in
\Om_{\varepsilon}$, one has $\mathcal{G}_{\varepsilon} \left(
\vv_{\varepsilon} \right) \eqs \mathcal{E}_{\calN}^{\varepsilon} \left(
\vv_{\varepsilon} \circ \psi_{\varepsilon} \right)$ for any $\vv_{\varepsilon}
\in H^1( \Om_{\varepsilon}, \jM)$ and, therefore,
$\vv_{\varepsilon}$ minimizes $\mathcal{G}_{\varepsilon}$ if and only if
$\uu_{\varepsilon} (\xi, \sss) \assign \vv_{\varepsilon} \left(
\psi_{\varepsilon} (\xi, \sss) \right)$ minimizes
$\mathcal{E}_{\calN}^{\varepsilon}$. For that, we need the following result.

\begin{lemma}
  \label{eq:lemmametricfactor}Let $0 < \varepsilon < \delta$ and $s \in I$.
  Set $\N_{\varepsilon \sss} \assign \left\{ x \in \Om_{\varepsilon} \of d
  \left( x, \N \right) = \varepsilon \sss \right\}$. The following assertions
  hold:
  \begin{enumerateroman}
    \item The metric factor which relates the volume form on $\N_{\varepsilon
    \sss}$ to the volume form on $\N$ is given by
    \begin{equation}
      \sqrt{\mathfrak{g}_{\varepsilon} (\xi, \sss)} \assign
      \Pi_{i = 1}^{n - 1} \left( 1 + \varepsilon \sss \kappa_i (\xi) \right) .
      \label{eq:metrcoeffvolume}
    \end{equation}
    In particular, when $n = 3$, one has
    \begin{equation}
      \sqrt{\mathfrak{g}_{\varepsilon} (\xi, \sss)} \assign
      \left| 1 + 2 \left( \varepsilon \sss \right) H (\xi) + \left(
      \varepsilon \sss \right)^2 G (\xi) \right|,
    \end{equation}
    where $H (\xi)$ and $G (\xi)$ are, respectively, the mean and Gaussian
    curvature at $\xi \in \N$.
    
    \item The metric coefficients $\mathfrak{h}_{1, \varepsilon} \left( \xi,
    \sss \right), \mathfrak{h}_{2, \varepsilon} (\xi, \sss),
    \ldots, \mathfrak{h}_{n - 1, \varepsilon} (\xi, \sss)$ \
    which connect the tangential gradient on $\N_{\varepsilon \sss}$ to the
    tangential gradient on $\N$ are given by
    \begin{equation}
      \mathfrak{h}_{i, \varepsilon} (\xi, \sss) \assign
      \frac{1}{1 + \varepsilon \sss \kappa_i (\xi)} \quad (i = 1, 2, \ldots, n
      - 1) . \label{eq:metrcoeffgradient}
    \end{equation}
    In other words, if $\uu_{\varepsilon} (\xi, \sss) \assign
    \vv_{\varepsilon} \left( \psi_{\varepsilon} (\xi, \sss)
    \right)$, then for every $i = 1, \ldots, n - 1$, there holds
    \[ \left( \partial_{\tmt_i (\xi)} \vv_{\varepsilon} \right) \circ
       \psi_{\varepsilon} (\xi, \sss) \eqs \mathfrak{h}_{i,
       \varepsilon} (\xi, \sss) \partial_{\tmt_i (\xi)}
       \uu_{\varepsilon} (\xi, \sss) . \]
    \item The metric coefficient $\mathfrak{h}_n (\xi, \sss)$,
    which connects the normal derivative on $\N_{\varepsilon \sss}$ to the
    $\sss$-derivative on $I$ is given by $\mathfrak{h}_{n, \varepsilon} \left(
    \xi, \sss \right) = 1 / \varepsilon$. In other words, if
    $\uu_{\varepsilon} (\xi, \sss) \assign \vv_{\varepsilon}
    \left( \psi_{\varepsilon} (\xi, \sss) \right)$ then
    \begin{equation}
      \left( \partial_{\tmmathbf{n}_{\N} (\xi)} \vv_{\varepsilon} \right)
      \circ \psi_{\varepsilon} (\xi, \sss) \assign
      \frac{1}{\varepsilon} \partial_s \uu_{\varepsilon} \left( \xi, \sss
      \right) . \label{eq:metrcoeffgradient2n}
    \end{equation}
  \end{enumerateroman}
\end{lemma}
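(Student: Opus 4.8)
The plan is to reduce all three assertions to one chain–rule computation for the $C^1$-diffeomorphism $\psi_\varepsilon(\xi,s)=\xi+\varepsilon s\,\tmmathbf{n}_\N(\xi)$, together with the elementary geometry of the principal frame; the only genuine subtlety is the bookkeeping of sign conventions. Since $\N$ is of class $C^2$, the Gauss map $\tmmathbf{n}_\N$ is of class $C^1$ and, at each $\xi\in\N$, its differential $(D\tmmathbf{n}_\N)_\xi\colon T_\xi\N\to T_\xi\N$ is self-adjoint and diagonal in the principal frame, with
\[
  (D\tmmathbf{n}_\N)_\xi\,\tmt_i(\xi)\eqs\kappa_i(\xi)\,\tmt_i(\xi),\qquad i=1,\dots,n-1,
\]
the sign of the principal curvatures $\kappa_i$ being the one fixed by the chosen orientation of $\N$ (so that, e.g., a round sphere oriented by its outer normal has all $\kappa_i>0$, consistently with \eqref{eq:metrcoeffvolume}). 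Because $\psi_\varepsilon$ is, by hypothesis, a $C^1$-diffeomorphism of $\calN$ onto $\Om_\varepsilon$, the composition $\uu_\varepsilon\assign\vv_\varepsilon\circ\psi_\varepsilon$ belongs to $H^1(\calN,\jM)$ whenever $\vv_\varepsilon\in H^1(\Om_\varepsilon,\jM)$, and the chain rule $D\uu_\varepsilon=(D\vv_\varepsilon\circ\psi_\varepsilon)\,D\psi_\varepsilon$ holds for a.e. $(\xi,s)\in\calN$; this is all we use about $\vv_\varepsilon$.

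The basic step is the action of $D\psi_\varepsilon|_{(\xi,s)}$ on the coordinate directions. Differentiating $\xi\mapsto\psi_\varepsilon(\xi,s)$ at $\xi$ along $\tmt_i(\xi)$ and using the displayed identity for $(D\tmmathbf{n}_\N)_\xi$ yields
\[
  \partial_{\tmt_i(\xi)}\bigl(\psi_\varepsilon(\,\cdot\,,s)\bigr)\eqs\tmt_i(\xi)+\varepsilon s\,(D\tmmathbf{n}_\N)_\xi\,\tmt_i(\xi)\eqs\bigl(1+\varepsilon s\,\kappa_i(\xi)\bigr)\,\tmt_i(\xi),\qquad i=1,\dots,n-1,
\]
while $\partial_s\psi_\varepsilon(\xi,s)=\varepsilon\,\tmmathbf{n}_\N(\xi)$. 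These $n$ vectors are pairwise orthogonal (the $\tmt_i(\xi)$ form an orthonormal basis of $T_\xi\N$, each orthogonal to $\tmmathbf{n}_\N(\xi)$); since $\N_{\varepsilon s}=\psi_\varepsilon(\N\times\{s\})$ (the level set $\{d_\N=\varepsilon s\}$, as recalled before the statement) and $\psi_\varepsilon$ is a diffeomorphism, the first $n-1$ of them span $T_{\psi_\varepsilon(\xi,s)}\N_{\varepsilon s}$, so that $\tmmathbf{n}_\N(\xi)$, being a unit vector orthogonal to all of them, is the unit normal to $\N_{\varepsilon s}$ at $\psi_\varepsilon(\xi,s)$.

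This already gives (i): the map $\xi\in\N\mapsto\psi_\varepsilon(\xi,s)\in\N_{\varepsilon s}$ has, in the orthonormal principal frame at $\xi$ and in the (also orthonormal) frame $(\tmt_i(\xi))_i$ of $T_{\psi_\varepsilon(\xi,s)}\N_{\varepsilon s}$, the diagonal Jacobian $\mathrm{diag}\bigl(1+\varepsilon s\kappa_1(\xi),\dots,1+\varepsilon s\kappa_{n-1}(\xi)\bigr)$; hence the ratio of the $(n-1)$-dimensional volume forms is $\prod_{i=1}^{n-1}\lvert 1+\varepsilon s\kappa_i(\xi)\rvert$, which equals \eqref{eq:metrcoeffvolume} once $\delta$ is small enough that each factor is positive — an assumption made throughout, cf. \eqref{eqs:lowbounds}. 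For $n=3$ one just expands $\prod_{i=1}^{2}(1+\varepsilon s\kappa_i)=1+\varepsilon s(\kappa_1+\kappa_2)+(\varepsilon s)^2\kappa_1\kappa_2=1+2(\varepsilon s)H+(\varepsilon s)^2 G$, with $H=\tfrac12(\kappa_1+\kappa_2)$ and $G=\kappa_1\kappa_2$. For (ii) and (iii) I would feed the above into the chain rule: for a.e. $(\xi,s)$,
\[
  \partial_{\tmt_i(\xi)}\uu_\varepsilon(\xi,s)\eqs D\vv_\varepsilon\bigl(\psi_\varepsilon(\xi,s)\bigr)\,\partial_{\tmt_i(\xi)}\bigl(\psi_\varepsilon(\,\cdot\,,s)\bigr)\eqs\bigl(1+\varepsilon s\kappa_i(\xi)\bigr)\,\bigl(\partial_{\tmt_i(\xi)}\vv_\varepsilon\bigr)\!\circ\psi_\varepsilon(\xi,s),
\]
and likewise $\partial_s\uu_\varepsilon(\xi,s)=D\vv_\varepsilon(\psi_\varepsilon(\xi,s))\,\partial_s\psi_\varepsilon(\xi,s)=\varepsilon\,\bigl(\partial_{\tmmathbf{n}_\N(\xi)}\vv_\varepsilon\bigr)\!\circ\psi_\varepsilon(\xi,s)$; solving for the derivatives of $\vv_\varepsilon$ gives \eqref{eq:metrcoeffgradient} with $\mathfrak{h}_{i,\varepsilon}(\xi,s)=(1+\varepsilon s\kappa_i(\xi))^{-1}$ and \eqref{eq:metrcoeffgradient2n} with $\mathfrak{h}_{n,\varepsilon}(\xi,s)=1/\varepsilon$. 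By the orthogonality and normality established above, $(\partial_{\tmt_i(\xi)}\vv_\varepsilon)\circ\psi_\varepsilon$ and $(\partial_{\tmmathbf{n}_\N(\xi)}\vv_\varepsilon)\circ\psi_\varepsilon$ are genuinely, respectively, a tangential derivative and the normal derivative of $\vv_\varepsilon$ along the parallel hypersurface $\N_{\varepsilon s}$, which is what makes the terminology of the statement meaningful. The computation is entirely routine; what little care it demands is notational — pinning down the sign of the $\kappa_i$ and recording that $\psi_\varepsilon$ transports the principal tangent frame and the normal of $\N$ onto those of $\N_{\varepsilon s}$ — and both follow directly from the elementary properties of the signed distance function collected before the statement.
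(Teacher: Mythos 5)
Your proposal is correct and follows essentially the same route as the paper: both rest on the single computation $d\psi_{\varepsilon}(\xi,s)(\tmt_i\oplus 0)=(1+\varepsilon s\,\kappa_i(\xi))\,\tmt_i(\xi)$ and $d\psi_{\varepsilon}(\xi,s)(\tmmathbf{0}\oplus 1)=\varepsilon\,\tmmathbf{n}_{\N}(\xi)$ in the principal frame, followed by the chain rule for (ii)--(iii). The only (cosmetic) difference is that the paper phrases (i) as the pullback of the $n$-dimensional volume form via differential forms and then divides by $\varepsilon$, whereas you read off the $(n-1)$-dimensional Jacobian of the slice map $\xi\mapsto\psi_{\varepsilon}(\xi,s)$ directly; the substance is identical.
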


\begin{proof}
  We use the language of differential forms as it simplifies the argument.
  
  {\noindent}{\tmem{i.}} For every $\varepsilon \in I_{\delta}$ the
  $n$-dimensional thin domain $\Om_{\varepsilon}$ is diffeomorphic to the
  product manifold $\N \times I$ via the (positively oriented) map
  $\psi_{\varepsilon} : (\xi, s) \in \N \times I \mapsto \xi + \varepsilon
  \sss \tmmathbf{n}_{\N} (\xi) \in \Om_{\varepsilon}$. The tangent map $d
  \psi_{\varepsilon} (\xi, \sss)$ at the point $\left( \xi, \sss
  \right)$ is the linear map from $T_{(\xi, s)} \left( \N \times I \right)
  \approx T_{\xi} \N \oplus \RR$ into $\RR^m$ defined for every $\jmu \assign
  \left( \tmt, t \right) \in T_{\xi} \N \oplus \RR$ by
  \begin{equation}
    d \psi_{\varepsilon} (\xi, \sss) \jmu = \left( \tmt +
    \varepsilon \sss \partial_{\tmt} \tmmathbf{n}_{\N} (\xi) \right) +
    \varepsilon t\tmmathbf{n}_{\N} (\xi) . \label{eq:differentialpsi}
  \end{equation}
  Now, consider the orthonormal basis of $T_{\xi} \N \oplus \RR$ given by
  \begin{equation}
  \left( \jmu_1, \ldots, \jmu_n \right) = \left( \tmt_1 \oplus 0, \ldots,
  \tmt_{n - 1} \oplus 0, \tmmathbf{0} \oplus 1 \right) ,
  \end{equation}
  where the unit tangent
  vectors $\left( \tmt_i \right)_{i = 1}^n$ are along the principal directions
  of $\N$ at $\xi$, so that
  \begin{equation}
    \partial_{\tmt_i} \tmmathbf{n} (\xi) = \kappa_i (\xi) \tmt_i (\xi) \quad
    \text{for } i = 1, \ldots, n - 1,
  \end{equation}
  with $\kappa_i$ the $i$-th principal curvatures of $\N$ at $\xi$. We then
  have
  \begin{align}
    d \psi_{\varepsilon} (\xi, \sss) \jmu_i & = \left( 1 +
    \varepsilon \sss \kappa_i \right) \tmt_i \quad \text{for } i = 1, \ldots,
    n - 1,  \label{eq:differentialpsiVal}\\
    d \psi_{\varepsilon} (\xi, \sss) \jmu_n & =  \varepsilon
    \tmmathbf{n}_{\N} .  \label{eq:differentialpsiVal2}
  \end{align}
  We denote by $\mathd  x \assign d x_1 \wedge \cdots \wedge d x_n$ the volume
  form on $\RR^n$ and by $(d \mu_i)_{i = 1}^n$ the basis of $\left( T_{\xi} \N
  \oplus \RR \right)^{\ast}$ dual to $\left( \jmu_i \right)_{i = 1}^n$. Given
  that both $\mathd  x$ and $d \mu_1 \wedge \cdots \wedge d \mu_n$ are forms
  of maximal degree $n$, there exists a function $\lambda \left( \xi, \sss
  \right)$ such that $(\psi_{\varepsilon}^{\ast} \mathd  x) \left( \xi, \sss
  \right) = \lambda (\xi, \sss) (d \mu_1 \wedge \cdots \wedge d
  \mu_n)$. A simple computation then gives
  \begin{align}
    \lambda & =  (\psi_{\varepsilon}^{\ast} \mathd  x) \left( \jmu_1, \ldots,
    \jmu_n \right) \\
    & =  (\mathd  x) \left( \left( 1 + \varepsilon \sss \kappa_1 \right)
    \tmt_1, \ldots, \left( 1 + \varepsilon \sss \kappa_{n - 1} \right) \tmt_{n
    - 1}, \varepsilon \tmmathbf{n}_{\N} \right) \\
    & \eqs  \varepsilon \Pi_{i = 1}^{n - 1} \left| 1 + \varepsilon \sss
    \kappa_{n - 1} \right|, 
  \end{align}
  and we can always assume that $\varepsilon$ is sufficiently small so that
  \begin{equation}
  \varepsilon \Pi_{i = 1}^{n - 1} | 1 + \varepsilon s \kappa_{n - 1} | =
  \varepsilon \Pi_{i = 1}^{n - 1} (1 + \varepsilon s \kappa_{n - 1}).
  \end{equation}
 Overall we get that
  \begin{equation}
    \frac{1}{\varepsilon} (\psi_{\varepsilon}^{\ast} \mathd  x) \left( \xi,
    \sss \right) = \sqrt{\mathfrak{g}_{\varepsilon} (\xi, \sss)}
    (d \mu_1 \wedge \cdots \wedge d \mu_n)
  \end{equation}
  with $\sqrt{\mathfrak{g}_{\varepsilon} (\xi, \sss)}$ given by
  \eqref{eq:metrcoeffvolume}. This proves {\tmem{i}}.
  
  {\noindent}({\tmem{ii}}) The expression of the metric coefficients
  $\mathfrak{h}_{i, \varepsilon}$ is a simple application of chain rule and
  \eqref{eq:differentialpsiVal}. Indeed, for every $i = 1, \ldots, n - 1$we
  have
  \begin{equation}
    d \uu_{\varepsilon} (\xi, \sss) \left( \jmu_i \right) \eqs d 
    \vv_{\varepsilon} \left( \psi_{\varepsilon} (\xi, \sss)
    \right) d \psi_{\varepsilon} (\xi, \sss) \jmu_i \eqs \left( 1
    + \varepsilon \sss \kappa_i \right) d  \vv_{\varepsilon} \left(
    \psi_{\varepsilon} (\xi, \sss) \right) \tmt_i
  \end{equation}
  from which the relations in \eqref{eq:metrcoeffgradient} follow.
  
  {\noindent}({\tmem{iii}}) To compute the metric coefficients
  $\mathfrak{h}_{n, \varepsilon}$, we proceed as in {\tmem{ii}} using
  \eqref{eq:differentialpsiVal2}. We have
  \begin{equation}
    d \uu_{\varepsilon} (\xi, \sss) \left( \jmu_n \right) \eqs
    \varepsilon d  \vv_{\varepsilon} (\psi_{\varepsilon} (\xi, s))
    \tmmathbf{n}_{\N}
  \end{equation}
  from which \eqref{eq:metrcoeffgradient2n} follows.
\end{proof}
The previous Lemma~\ref{eq:lemmametricfactor} allows us to fix the domain of
integration and gives an equivalent representation of
$\mathcal{G}_{\varepsilon}$ from an integral functional defined on
$\Om_{\varepsilon}$ to an integral functional on $\calN = \N \times I$.

By coarea formula and \eqref{eq:metrcoeffvolume} from
Lemma~\ref{eq:lemmametricfactor} we get that
\begin{align}
  \mathcal{G}_{\varepsilon} \left( \vv_{\varepsilon} \right) & \eqs 
  \frac{1}{2 \varepsilon} \int_{\Om_{\varepsilon}} \left| D \vv_{\varepsilon}
  (x) + \Aop \left( \vv_{\varepsilon} (x) \right) \right|^2 \mathd x \\
  & \eqs
   \frac{1}{2 \varepsilon} \int_I \int_{\N_{\varepsilon \sss}} \left| D
  \vv_{\varepsilon} (y) + \Aop \left( {\vv_{\varepsilon}}  (y) \right)
  \right|^2 \mathd \mathcal{H}^{n - 1} (y) \mathd \sss \nonumber\\
  & \eqs \frac{1}{2} \int_{\calN} \left| D \vv_{\varepsilon} \circ
  \psi_{\varepsilon} (\xi, \sss) + \Aop \left( \left(
  \vv_{\varepsilon} \circ \psi_{\varepsilon} \right) (\xi, \sss)
  \right) \right|^2  \sqrt{\mathfrak{g}_{\varepsilon} \left( \xi, \sss
  \right)} \mathd \mathcal{H}^{n - 1} (\xi) \mathd \sss .  \label{eq:jfirsteq}
\end{align}
Next, projecting the gradient onto the orthonormal (moving) frame induced by $\N$, i.e., on
\begin{equation}
\left(
\tmt_1 (\xi), \tmt_2 (\xi), \ldots, \tmt_{n - 1} (\xi), \tmmathbf{n}_{\N}
(\xi) \right)\,
\end{equation} 
we get that for any $x \in \Om_{\varepsilon}$
there holds
\begin{align}
 \left| D \vv_{\varepsilon} (x) + \Aop \left( \vv_{\varepsilon} (x) \right)
   \right|^2 & =  \sum_{i = 1}^{n - 1} \left| \partial_{\tmt_i (\xi)}
   \vv_{\varepsilon} (x) + \Aop \left( \vv_{\varepsilon} (x) \right) \bp
   \tmt_i (\xi) \right|^2  \\
   &\qquad \qquad + \left| \partial_{\tmmathbf{n}_{\N} (\xi)}
   \vv_{\varepsilon} (x) + \Aop \left( \vv_{\varepsilon} (x) \right) \bp
   \tmmathbf{n}_{\N} (\xi) \right|^2 \, ,
\end{align}
with $\xi = \pi_{\N} (x)$ the nearest point projection of $x$ on $\N$ defined
by \eqref{eq:npprojection}. Therefore, from \eqref{eq:metrcoeffgradient} and
\eqref{eq:metrcoeffgradient2n} we have that
\begin{align}
  \left| D \vv_{\varepsilon} \circ \psi_{\varepsilon} (\xi, \sss)
  + \Aop \left( \left( \vv_{\varepsilon} \circ \psi_{\varepsilon} \right)
  (\xi, \sss) \right) \right|^2 & \eqs \sum_{i = 1}^{n - 1}
  \left| \mathfrak{h}_{i, \varepsilon} (\xi, \sss)
  \partial_{\tmt_i (\xi)} \uu_{\varepsilon} (\xi, \sss) + \Aop
  \left( \uu_{\varepsilon} (\xi, \sss) \right) \bp \tmt_i (\xi)
  \right|^2 \nonumber\\
  &  \qquad + \left| \frac{1}{\varepsilon} \partial_{\sss}
  \uu_{\varepsilon} (\xi, \sss) + \Aop \left( \uu_{\varepsilon}
  (\xi, \sss) \right) \bp \tmmathbf{n}_{\N} (\xi) \right|^2 . 
  \label{eq:jseceq}
\end{align}
Combining \eqref{eq:jfirsteq} and \eqref{eq:jseceq}, we infer the equality of
$\mathcal{G}_{\varepsilon} \left( \vv_{\varepsilon} \right)$ and
$\mathcal{E}_{\N}^{\varepsilon} (\tmmathbf{u}_{\varepsilon})$. Note that the
previous computation also shows that $\vv_{\varepsilon} \in H^1(
\Om_{\varepsilon}, \jM)$ if, and only if, $\tmmathbf{u}_{\varepsilon}
\in H^1( \calN, \jM)$. Finally, since the superposition operator
$\tmmathbf{v}_{\varepsilon} \in H^1( \Om_{\varepsilon}, \jM)
\mapsto (\tmmathbf{v}_{\varepsilon} \circ \psi_{\varepsilon}) \in H^1(
\calN, \jM)$ is surjective, we get that
\begin{equation}
  \inf_{\vv_{\varepsilon} \in H^1( \Om_{\varepsilon}, \jM)}
  \mathcal{G}_{\varepsilon} \left( \vv_{\varepsilon} \right) \; \eqs
  \inf_{\tmmathbf{u}_{\varepsilon} \in H^1( \calN, \jM)}
  \mathcal{E}_{\N}^{\varepsilon} \left( \uu_{\varepsilon} \right) .
  \label{eq:minGLonM}
\end{equation}
This concludes the proof of the first part of Theorem~\ref{thm:main}.

\subsection{Compactness}\label{subsec:equicoercivityffprime}We now show that
the family $( \mathcal{E}_{\calN}^{\varepsilon})_{\varepsilon \in
I_{\delta+}}$ is equicoercive in the weak topology of $H^1( \calN, \jM)$. This means, by definition, that there exists a nonempty and weakly
compact set $K \subseteq H^1( \calN, \jM)$ such that
\begin{equation}
  \min_{H^1( \calN, \jM)} \mathcal{E}_{\calN}^{\varepsilon} =
  \min_K \mathcal{E}_{\calN}^{\varepsilon} \quad \forall \varepsilon \in
  I_{\delta+} . \label{eq:equicoerciveE}
\end{equation}
The equicoercivity of $( \mathcal{E}_{\calN}^{\varepsilon}
)_{\varepsilon \in I_{\delta+}}$ will assure that we can rely on the
fundamental theorem of $\Gamma$-convergence concerning the variational
convergence of minimum problems (see, e.g., {\cite{dal1993introduction}}).
To show \eqref{eq:equicoerciveE}, we observe that for any constant in space
$\sigma \in H^1( \calN, \jM)$ we have
\begin{equation}
  \min_{\tmmathbf{u} \in H^1( \calN, \jM)}
  \mathcal{E}_{\calN}^{\varepsilon} ( \uu) \leqslant
  \mathcal{E}_{\calN}^{\varepsilon} (\sigma) = \frac{1}{2} \int_{\calN} \left|
  \Aop (\sigma) \right|^2 \sqrt{\mathfrak{g}_{\varepsilon} \left( \xi, \sss
  \right)} \mathd \xi \mathd \sss .
\end{equation}
Taking into account \eqref{eqs:lowbounds} and \eqref{eq:cALipnew}, we end up
with
\begin{equation}
  \min_{\tmmathbf{u} \in H^1( \calN, \jM)}
  \mathcal{E}_{\calN}^{\varepsilon} ( \uu) \; \leqslant \;
  \frac{c_{\Aop}^2}{2} \int_{\calN}   \sqrt{\mathfrak{g}_{\varepsilon} \left(
  \xi, \sss \right)} \mathd \xi \mathd \sss \; \leqslant \; c_{\Aop}^2 \cdot
  c_{\calN} \cdot \left| \N \right|,
\end{equation}
Therefore, for every $\varepsilon \in I_{\delta+}$, the minimizers of $\left(
\mathcal{E}_{\calN}^{\varepsilon} \right)_{\varepsilon \in I_{\delta+}}$ are
in
\begin{equation}
K( \calN, \jM) \assign \lcup_{\varepsilon \in I_{\delta+}}
   \left\{ \uu \in H^1( \calN, \jM) \of
   \mathcal{E}_{\calN}^{\varepsilon} ( \uu) \leqslant c_{\Aop}^2
   \cdot c_{\calN} \cdot \left| \N \right| \right\} .
   \end{equation}
Also, since the principal curvatures $\kappa_1, \ldots, \kappa_{n - 1}$ are
bounded on $\N$, whenever the radius $\delta \in \RR_+$ of the tubular
neighborhood $\Om_{\delta}$ is sufficiently small, we have that $\inf_{\left(
\xi, \sss \right) \in \calN} \mathfrak{h}_{i, \varepsilon} \left( \xi, \sss
\right) \geqslant c_{\calN}$ for every $\varepsilon \in I_{\delta+}$
({\tmabbr{cf.}} \eqref{eqs:lowbounds}). Therefore, from Young inequality, we
get
\begin{equation}
  \left| \mathfrak{h}_{i, \varepsilon} (\xi, \sss)
  \partial_{\tmt_i (\xi)} \uu_{\varepsilon} (\xi, \sss) + \Aop
  \left( \uu_{\varepsilon} (\xi, \sss) \right) \bp \tmt_i (\xi)
  \right|^2  \geqslant  \frac{1}{2 c_{\calN}^2} \left| \partial_{\tmt_i
  (\xi)} \uu_{\varepsilon} (\xi, \sss) \right|^2 - \left| \Aop
  \left( \uu_{\varepsilon} (\xi, \sss) \right) \bp \tmt_i (\xi)
  \right|^2,  \label{eq:lbcompact1}
\end{equation}
and
\begin{equation}
  \left| \frac{1}{\varepsilon} \partial_{\sss} \uu_{\varepsilon} \left( \xi,
  \sss \right) + \Aop \left( \uu_{\varepsilon} (\xi, \sss)
  \right) \bp \tmmathbf{n}_{\N} (\xi) \right|^2  \geqslant  \frac{1}{2
  \varepsilon^2} \left| \partial_{\sss} \uu_{\varepsilon} \left( \xi, \sss
  \right) \right|^2 - \left| \Aop \left( \uu_{\varepsilon} \left( \xi, \sss
  \right) \right) \bp \tmmathbf{n}_{\N} (\xi) \right|^2 .
  \label{eq:lbcompact2}
\end{equation}
Therefore
\begin{align}
   \sum_{i = 1}^{n - 1} \frac{1}{2 c_{\calN}^2} \left| \partial_{\tmt_i}
  \uu_{\varepsilon} \right|^2  + \frac{1}{2 \varepsilon^2} \left|
  \partial_{\sss} \uu_{\varepsilon} \right|^2 & \leqslant  + c_{\calN}
  \sum_{i = 1}^{n - 1} \left| \mathfrak{h}_{i, \varepsilon} \partial_{\tmt_i}
  \uu_{\varepsilon} + \Aop \left( \uu_{\varepsilon} \right) \bp \tmt_i
  \right|^2  \sqrt{\mathfrak{g}_{\varepsilon}}  \nonumber\\
  &   \hspace{4em} + c_{\calN} \left| \frac{1}{\varepsilon} \partial_{\sss}
  \uu_{\varepsilon} + \Aop \left( \uu_{\varepsilon} \right) \bp
  \tmmathbf{n}_{\N} \right|^2  \sqrt{\mathfrak{g}_{\varepsilon}} + c_{\calN}
  c_{\Aop}^2 .
\end{align}
It follows that if $\tmmathbf{u} \in K( \calN, \jM)$, then
\begin{align}
  \frac{1}{2 c_{\calN}^2} \int_{\calN} \left| \grad_{\xi} \uu_{\varepsilon}
  (\xi, \sss) \right|^2 \mathd \xi \mathd \sss + \frac{1}{2
  \varepsilon^2} \int_{\calN} \left| \partial_{\sss} \uu_{\varepsilon} \left(
  \xi, \sss \right) \right|^2 \mathd \xi \mathd \sss & \leqslant  2 c_{\calN}
  \mathcal{E}_{\calN}^{\varepsilon} ( \uu) + c_{\calN} c_{\Aop}^2
  \nonumber\\
  & \leqslant  c_{\Aop}^2 \left( 2 c_{\calN}^2 \left| \N \right| + c_{\calN}
  \right), 
\end{align}
and, in particular,
\begin{equation}
  \left\| \uu \right\|_{H^1( \calN, \jM)}^2 \leqslant c_{\Aop,
  \N}^2
\end{equation}
for some positive constant $c_{\Aop, \N}$ that does not depend on
$\varepsilon$. In other words, the set $K( \calN, \jM)$ is
contained in the bounded subset $H^1_b ( \calN, \jM)$ of $H^1( \calN, \RR^m)$ given by the intersection of $H^1( \calN,
\jM)$ with the ball of $H^1( \calN, \RR^m)$ centered at
the origin and of radius $c_{\Aop, \N}$. Thus, for any $\varepsilon \in
I_{\delta+}$
\begin{equation}
  \min_{\uu \in H^1( \calN, \jM)}
  \mathcal{E}_{\calN}^{\varepsilon} ( \uu) = \min_{\uu \in H^1_b( \calN, \jM)} \mathcal{E}_{\calN}^{\varepsilon} ( \uu) .
\end{equation}
To prove that $H^1_b ( \calN, \jM)$ is weakly compact, it is
sufficient to show that the set $H^1_b ( \calN, \jM)$ is weakly
closed. To this end, we simply observe that if $\left( \uu_n \right)_{n \in
\NN}$ is a sequence in $H^1_b ( \calN, \jM)$ such that $\uu_n
\rightharpoonup \uu_0$ weakly in $H^1( \calN, \RR^m)$, by
Rellich-Kondrachov theorem, $\uu_n \rightharpoonup \uu_0$ strongly in $L^2( \calN, \RR^m)$, and therefore, up to the extraction of an a.e.
pointwise converging subsequence, we get that the $\uu_0$ still takes values
in $\jM$. Indeed, the distance function from the boundary is a continuous
function and, therefore, if $d_{\jM} \left( \uu_n \right) = 0$, then also
$d_{\jM} (\uu_0 ) = 0$. Thus $\uu_0 (\xi, \sss) \in
\jM$ for {\tmabbr{a.e.}} $(\xi, \sss) \in \calN$ and this
concludes the equicoerciveness proof.

\subsection{The identification of the
$\Gamma$-limit}\label{subsec:identificationEN}In this section, we compute
$\mathcal{E}_{\calN} \assign \Gamma \text{-} \lim_{\varepsilon \rightarrow 0}
\mathcal{E}_{\calN}^{\varepsilon}$. We first establish a compactness result
({\tmabbr{cf.~}}Proposition~\ref{prop:forGammaliminf}). Afterward, in
Proposition~\ref{propo:GammalimiEexch}, we prove the $\Gamma$-limsup
inequality, i.e., the existence of a recovery sequence, and the
$\Gamma$-$\liminf$ inequality for $\left( \mathcal{E}_{\calN}^{\varepsilon}
\right)_{\varepsilon \in I_{\delta+}}$, i.e., that for any family $(
\uu_{\varepsilon} )_{\varepsilon \in I_{\delta+}}$ weakly convergent to
some $\uu_0 \in H^1( \calN, \jM)$ we have $\mathcal{E}_{\calN}
(\uu_0 ) \leqslant \limid{_{\varepsilon \rightarrow 0}
\mathcal{E}_{\calN}^{\varepsilon} ( \uu_{\varepsilon} )}$. With no
loss of generality, for the computation of $\Gamma$-$\liminf$ we can assume
that $\limid{_{\varepsilon \rightarrow 0} \mathcal{E}_{\calN}^{\varepsilon}
( \uu_{\varepsilon})} < + \infty$.
\begin{proposition}
  \label{prop:forGammaliminf}Assume that $\left( \uu_{\varepsilon}
  \right)_{\varepsilon \in I_{\delta+}}$ is a family in $H^1( \calN,
  \jM)$ such that
  \[ \limid{_{\varepsilon \rightarrow 0} \mathcal{E}_{\calN}^{\varepsilon}
     \left( \uu_{\varepsilon} \right)} < + \infty . \]
  Then there exist elements $\uu_0 \in H^1( \calN, \jM)$ and
  $\tmmathbf{d}_0 \in L^2( \calN, \RR^m)$ such that
  \begin{align}
    \uu_{\varepsilon} & \rightarrow \uu_0  \;\;  \text{weakly in } H^1(
    \calN, \jM),  \label{propgamliminf1}\\
    \frac{1}{\varepsilon} \partial_{\sss} \uu_{\varepsilon} \left( \xi, \sss
    \right) & \rightarrow  \tmmathbf{d}_0  \;\; \text{weakly in } L^2 (
    \calN, \RR^m) .  \label{propgamliminf2}
  \end{align}
  Moreover, $\uu_0$ is $0$-homogeneous along the normal to $\N$, i.e., it is
  an element $\uu_0$ of the form
  \begin{equation}
    \uu_0 (\xi, \sss) = \widetilde{\uu}_0 (\xi), \quad \text{for
    a.e. } (\xi, \sss) \in \calN \label{eq:tensorformu0}
  \end{equation}
  for some $\widetilde{\uu}_0 \in H^1( \N, \jM)$, and
  $\tmmathbf{d}_0$ is tangent to $\jM$ at $\uu_0$, i.e.,
  \begin{equation}
    \tmmathbf{d}_0 (\xi, \sss) \cdot \tmmathbf{n}_{\jM} (\tmmathbf{u}_0 (\xi))
    = 0 \quad \text{for a.e. } (\xi, \sss) \in \calN .
    \label{propgamliminf4}
  \end{equation}
\end{proposition}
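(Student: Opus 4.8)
The plan is to combine the a priori bounds already established in Subsection~\ref{subsec:equicoercivityffprime} with standard weak-compactness, Rellich--Kondrachov, and weak--strong convergence arguments, using the chain rule for Sobolev maps to bring in the target constraint. First I would pass to a subsequence $\varepsilon_k \to 0$ realizing the $\liminf$, so that $\mathcal{E}_{\calN}^{\varepsilon_k}(\uu_{\varepsilon_k}) \leqslant C$ for all $k$ and some $C < +\infty$. The inequalities \eqref{eq:lbcompact1}--\eqref{eq:lbcompact2}, together with the boundedness of $\jM$, then give uniformly in $k$ that $\|\uu_{\varepsilon_k}\|_{H^1(\calN,\RR^m)} \leqslant C'$ and $\varepsilon_k^{-2}\int_{\calN} |\partial_s \uu_{\varepsilon_k}|^2 \, \mathd \xi \, \mathd s \leqslant C''$. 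By reflexivity, up to a further subsequence (not relabelled), $\uu_{\varepsilon_k} \rightharpoonup \uu_0$ weakly in $H^1(\calN, \RR^m)$ and $\varepsilon_k^{-1}\partial_s \uu_{\varepsilon_k} \rightharpoonup \tmmathbf{d}_0$ weakly in $L^2(\calN,\RR^m)$ for some $\uu_0, \tmmathbf{d}_0$; by Rellich--Kondrachov $\uu_{\varepsilon_k} \to \uu_0$ strongly in $L^2$ and, up to subsequence, $\calN$-a.e., and exactly as in the weak-closedness argument of Subsection~\ref{subsec:equicoercivityffprime} (continuity of $d_{\jM}$) the constraint is preserved, so $\uu_0 \in H^1(\calN,\jM)$. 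This gives \eqref{propgamliminf1}--\eqref{propgamliminf2}.

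Next I would prove the $0$-homogeneity \eqref{eq:tensorformu0}. From the second bound, $\partial_s \uu_{\varepsilon_k} = \varepsilon_k \big( \varepsilon_k^{-1}\partial_s \uu_{\varepsilon_k}\big) \to 0$ strongly in $L^2(\calN,\RR^m)$; on the other hand weak $H^1$-convergence forces $\partial_s \uu_{\varepsilon_k} \rightharpoonup \partial_s \uu_0$ in $L^2$, whence $\partial_s \uu_0 = 0$. By Fubini, for a.e. $\xi \in \N$ the slice $s \mapsto \uu_0(\xi,s)$ belongs to $H^1(I)$ with vanishing derivative, hence is constant; calling $\widetilde{\uu}_0(\xi)$ that constant yields \eqref{eq:tensorformu0}, and the identity $\int_{\N} \big(|\widetilde{\uu}_0|^2 + |\nabla_{\xi}\widetilde{\uu}_0|^2\big) = \tfrac12 \|\uu_0\|_{H^1(\calN)}^2 < \infty$, together with $\widetilde{\uu}_0 \in \jM$ a.e., shows $\widetilde{\uu}_0 \in H^1(\N,\jM)$.

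For the tangency \eqref{propgamliminf4}, since $\jM$ is of class $C^2$ the signed distance $d_{\jM}$ is $C^1$ on a tubular neighbourhood of $\jM$ with $\nabla d_{\jM} = \tmmathbf{n}_{\jM}$ on $\jM$; extending it to a $C^1$ function on $\RR^m$ with bounded gradient and applying the chain rule for Sobolev maps to the identity $d_{\jM}\circ \uu_{\varepsilon_k} \equiv 0$ gives $\partial_s \uu_{\varepsilon_k} \cdot \tmmathbf{n}_{\jM}(\uu_{\varepsilon_k}) = 0$ a.e. on $\calN$. Hence for every $\varphi \in C^\infty_c(\calN)$ one has $\int_{\calN} \big(\varepsilon_k^{-1}\partial_s \uu_{\varepsilon_k}\big)\cdot \tmmathbf{n}_{\jM}(\uu_{\varepsilon_k})\,\varphi\, \mathd \xi \, \mathd s = 0$. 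Since $|\tmmathbf{n}_{\jM}| \equiv 1$ and $\tmmathbf{n}_{\jM}$ is continuous, the $\calN$-a.e. convergence $\uu_{\varepsilon_k} \to \uu_0$ and dominated convergence give $\tmmathbf{n}_{\jM}(\uu_{\varepsilon_k})\,\varphi \to \tmmathbf{n}_{\jM}(\uu_0)\,\varphi$ strongly in $L^2$; combined with $\varepsilon_k^{-1}\partial_s \uu_{\varepsilon_k} \rightharpoonup \tmmathbf{d}_0$ weakly in $L^2$, the weak--strong pairing passes to the limit and yields $\int_{\calN}\tmmathbf{d}_0 \cdot \tmmathbf{n}_{\jM}(\uu_0)\,\varphi = 0$ for all $\varphi$, i.e. \eqref{propgamliminf4}.

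I expect the tangency step to be the only delicate point: it is where the manifold structure of the \emph{target} genuinely enters, and it requires both the chain rule for $H^1$ maps into the $C^2$ hypersurface $\jM$ (legitimised via the bounded $C^1$ extension of $d_{\jM}$) and care in passing to the limit in the product of a weakly convergent and a strongly convergent sequence --- the latter relying crucially on the uniform bound $|\tmmathbf{n}_{\jM}|\equiv 1$. Everything else is a routine application of reflexivity, compact Sobolev embedding, and Fubini.
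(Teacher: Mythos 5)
Your proposal is correct and follows essentially the same route as the paper: energy bounds via \eqref{eq:lbcompact1}--\eqref{eq:lbcompact2} give weak $H^1$ compactness and the weak $L^2$ limit of $\varepsilon^{-1}\partial_{\sss}\uu_{\varepsilon}$, strong $L^2$ convergence of $\partial_{\sss}\uu_{\varepsilon}$ to zero yields the $\sss$-independence, and the tangency follows from differentiating $d_{\jM}(\uu_{\varepsilon})=0$ in $\sss$ and pairing the weakly convergent $\varepsilon^{-1}\partial_{\sss}\uu_{\varepsilon}$ against the strongly convergent $\tmmathbf{n}_{\jM}(\uu_{\varepsilon})$. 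Your explicit subsequence extraction, slicing argument, and the $C^1$ extension of $d_{\jM}$ are slightly more detailed than the paper's write-up but do not change the argument.
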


\begin{proof}
  Using \eqref{eqs:lowbounds}, \eqref{eq:rewriteG1}, and
  \eqref{eq:lbcompact2}, we obtain
  \begin{align}
    + \infty \; > \; \limi{\varepsilon \rightarrow 0}
    \mathcal{E}_{\calN}^{\varepsilon} \left( \uu_{\varepsilon} \right) &
    \geqslant  \limi{\varepsilon \rightarrow 0}  \frac{1}{2}
    {\int_{\calN}}_{} \left| \frac{1}{\varepsilon} \partial_{\sss}
    \uu_{\varepsilon} (\xi, \sss) + \Aop \left( \uu_{\varepsilon}
    (\xi, \sss) \right) \bp \tmmathbf{n}_{\N} (\xi) \right|^2 
    \sqrt{\mathfrak{g}_{\varepsilon} (\xi, \sss)} \mathd \xi
    \mathd \sss, \nonumber\\
    & \geqslant   \limi{\varepsilon \rightarrow 0}  \frac{c^{-
    1}_{\calN}}{4 \varepsilon^2} {\int_{\calN}}_{} \left| \partial_{\sss}
    \uu_{\varepsilon} (\xi, \sss) \right|^2 \mathd \xi \mathd
    \sss  - \frac{1}{2} c^{- 1}_{\calN} c_{\Aop}^2 .  \label{eq:tempk1}
  \end{align}
  From the previous estimate, \eqref{propgamliminf1} and
  \eqref{propgamliminf2} immediately follows. Also, by the lower
  semicontinuity of the norm, we get that $0 = \lim_{\varepsilon \rightarrow
  0} \left\| \partial_{\sss} \uu_{\varepsilon} \right\|_{L^2(\calN)} = \left\| \partial_{\sss} \uu_0 \right\|_{L^2(\calN
  )}$. Since we also have $\partial_{\sss} \uu_{\varepsilon}
  \rightharpoonup \partial_{\sss} \uu_0$ in $\mathcal{D}'( \calN)$, overall, we get that
  \begin{equation}
    \partial_{\sss} \uu_{\varepsilon} \rightarrow \partial_{\sss} \uu_0 \left(
    \xi, \sss \right)  \text{ strongly in } L^2 ( \calN, \RR^m),
    \quad \partial_{\sss} \uu_0 (\xi, \sss) = 0 \text{ a.e. in }
    \calN . \label{eq:tempk3}
  \end{equation}
  This proves \eqref{eq:tensorformu0}.
  
  To show \eqref{propgamliminf4}, we denote by $d_{\jM}$ the signed distance
  from $\jM$, which in our hypotheses on $\jM$ is such that $\grad d_{\jM}
  (\sigma) =\tmmathbf{n}_{\jM} (\sigma)$. We then have $0 = \partial_{\sss}
  \left( d_{\jM} \left( \uu_{\varepsilon} \right) \right) =\tmmathbf{n}_{\jM}
  \left( \uu_{\varepsilon} \right) \cdot \partial_{\sss} \uu_{\varepsilon}$
  with $\varepsilon^{- 1} \partial_{\sss} \uu_{\varepsilon} \rightharpoonup
  \tmmathbf{d}_0$ weakly in $L^2(\calN, \RR^m)$ and
  $\tmmathbf{n}_{\jM} \left( \uu_{\varepsilon} \right) \rightarrow
  \tmmathbf{n}_{\jM} (\uu_0 )$ strongly in $L^2 \left( \jM, \RR^m
  \right)$. This shows \eqref{propgamliminf4}.
\end{proof}

\begin{proposition}
  \label{propo:GammalimiEexch}The family $(
  \mathcal{E}_{\calN}^{\varepsilon} )_{\varepsilon \in I_{\delta+}}$ of
  exchange energy on $\calN$, $\Gamma$-converges, with respect to the weak
  topology of $H^1( \calN, \jM)$, to the functional
  {\tmem{\eqref{eq:gleF}}}, i.e., to the functional $\mathcal{E}_{\calN}$
  defined on $H^1( \calN, \jM)$ by
  \begin{equation}
    \mathcal{E}_{\calN} ( \uu) = \int_{\N} \left|  \grad_{\xi} 
    \uu (\xi) + \Aop_{\xi} (\uu(\xi)) \right|^2 + \left( \Aop
    (\uu(\xi)) \tmmathbf{n}_{\N} (\xi) \cdot \tmmathbf{n}_{\jM}
    (\uu(\xi)) \right)^2 \mathd \xi \label{eq:GammaliEprime0}
  \end{equation}
  if $\uu$ does not depend on the $\sss$-variable, and $\mathcal{E}_{\calN}
  ( \uu) = + \infty$ otherwise.
\end{proposition}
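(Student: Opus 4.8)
The plan is to establish the two $\Gamma$-convergence inequalities sequentially; working with sequences is legitimate because, by the equicoercivity of Subsection~\ref{subsec:equicoercivityffprime}, the minimisation effectively takes place on the bounded set $H^1_b(\calN,\jM)$, on which the weak $H^1$-topology is metrizable, so that the fundamental theorem of $\Gamma$-convergence (see, e.g., \cite{dal1993introduction}) applies. I start with the $\Gamma$-$\liminf$ inequality. Let $\uu_{\varepsilon}\rightharpoonup\uu_0$ in $H^1(\calN,\jM)$ and assume $\liminf_{\varepsilon\to0}\mathcal{E}_{\calN}^{\varepsilon}(\uu_{\varepsilon})<+\infty$, otherwise there is nothing to prove. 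Along a subsequence realising the $\liminf$, Proposition~\ref{prop:forGammaliminf} gives that $\uu_0$ is independent of $s$ (so $\mathcal{E}_{\calN}(\uu_0)$ is given by the first branch of \eqref{eq:gleF}), that $\varepsilon^{-1}\partial_s\uu_{\varepsilon}\rightharpoonup\tmmathbf{d}_0$ in $L^2(\calN,\RR^m)$, and that $\tmmathbf{d}_0\cdot\tmmathbf{n}_{\jM}(\uu_0)=0$ a.e. Since $\Aop$ and $\tmmathbf{n}_{\jM}$ are Lipschitz on the compact $\jM$ and $\uu_{\varepsilon}\to\uu_0$ in $L^2$ by Rellich--Kondrachov, we get $\Aop(\uu_{\varepsilon})\to\Aop(\uu_0)$ and $\tmmathbf{n}_{\jM}(\uu_{\varepsilon})\to\tmmathbf{n}_{\jM}(\uu_0)$ in $L^2$; combined with $\mathfrak{h}_{i,\varepsilon}\to1$ and $\sqrt{\mathfrak{g}_{\varepsilon}}\to1$ uniformly (Lemma~\ref{eq:lemmametricfactor} and \eqref{eqs:lowbounds}) and with $\partial_{\tmt_i(\xi)}\uu_{\varepsilon}\rightharpoonup\partial_{\tmt_i(\xi)}\uu_0$ in $L^2$, one checks that the $\RR^{mn}$-valued field whose columns are $\mathfrak{h}_{i,\varepsilon}\partial_{\tmt_i}\uu_{\varepsilon}+\Aop(\uu_{\varepsilon})\tmt_i$ ($i=1,\dots,n-1$) and $\varepsilon^{-1}\partial_s\uu_{\varepsilon}+\Aop(\uu_{\varepsilon})\tmmathbf{n}_{\N}$ converges weakly in $L^2(\calN,\RR^{mn})$ to the field with columns $\partial_{\tmt_i}\uu_0+\Aop(\uu_0)\tmt_i$ and $\tmmathbf{d}_0+\Aop(\uu_0)\tmmathbf{n}_{\N}$. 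Bounding $\sqrt{\mathfrak{g}_{\varepsilon}}$ below by $\inf_{\calN}\sqrt{\mathfrak{g}_{\varepsilon}}=1-o(1)$, using weak lower semicontinuity of the $L^2$-norm, and then retaining in $|\tmmathbf{d}_0+\Aop(\uu_0)\tmmathbf{n}_{\N}|^2$ only its component along $\tmmathbf{n}_{\jM}(\uu_0)$ — which equals $\Aop(\uu_0)\tmmathbf{n}_{\N}\cdot\tmmathbf{n}_{\jM}(\uu_0)$ because $\tmmathbf{d}_0\perp\tmmathbf{n}_{\jM}(\uu_0)$ — yields
\[
\liminf_{\varepsilon\to0}\mathcal{E}_{\calN}^{\varepsilon}(\uu_{\varepsilon})\ \geqslant\ \tfrac12\int_{\calN}\Big(\textstyle\sum_{i=1}^{n-1}\big|\partial_{\tmt_i}\uu_0+\Aop(\uu_0)\tmt_i\big|^2+\big(\Aop(\uu_0)\tmmathbf{n}_{\N}\cdot\tmmathbf{n}_{\jM}(\uu_0)\big)^2\Big)\,\mathd\xi\,\mathd s ,
\]
and, the integrand being $s$-free, the right-hand side is exactly $\mathcal{E}_{\calN}(\uu_0)$.

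For the $\Gamma$-$\limsup$ inequality, if $\partial_s\uu_0\neq0$ then the previous step forces $\liminf_{\varepsilon\to0}\mathcal{E}_{\calN}^{\varepsilon}(\uu_{\varepsilon})=+\infty$ for every sequence $\uu_{\varepsilon}\rightharpoonup\uu_0$, so the $\Gamma$-limit at $\uu_0$ is $+\infty=\mathcal{E}_{\calN}(\uu_0)$; hence we may assume $\uu_0(\xi,s)=\widetilde{\uu}_0(\xi)$ with $\widetilde{\uu}_0\in H^1(\N,\jM)$. The flat competitor $\uu_{\varepsilon}\equiv\uu_0$ is \emph{not} a recovery sequence: it contributes the whole of $\tfrac12\int_{\calN}|\Aop(\uu_0)\tmmathbf{n}_{\N}|^2$ rather than just its $\tmmathbf{n}_{\jM}(\uu_0)$-component. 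The correct sequence absorbs the $T\jM$-tangential part of $\Aop(\uu_0)\tmmathbf{n}_{\N}$ into a first-order perturbation along the normal direction. Writing $P_{\sigma}$ for the orthogonal projection of $\RR^m$ onto $T_{\sigma}\jM$ and setting $\tmmathbf{d}_0(\xi):=-P_{\widetilde{\uu}_0(\xi)}\big(\Aop(\widetilde{\uu}_0(\xi))\tmmathbf{n}_{\N}(\xi)\big)$, I take
\[
\uu_{\varepsilon}(\xi,s):=\pi_{\jM}\big(\widetilde{\uu}_0(\xi)+\varepsilon\,s\,\tmmathbf{d}_0(\xi)\big),
\]
with $\pi_{\jM}$ the $C^1$ nearest-point projection onto $\jM$, well defined for $\varepsilon$ small since the argument stays in a tubular neighbourhood of $\jM$. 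The chain rule for compositions of Lipschitz maps with Sobolev maps (note that $\xi\mapsto\Aop(\widetilde{\uu}_0(\xi))$ lies in $H^1$ because $\Aop$ is Lipschitz and $\widetilde{\uu}_0\in H^1\cap L^{\infty}$, while $\pi_{\jM},\tmmathbf{n}_{\N},\tmmathbf{n}_{\jM}$ are of class $C^1$) shows $\uu_{\varepsilon}\in H^1(\calN,\jM)$, $\uu_{\varepsilon}\to\uu_0$ in $L^2$, $\partial_{\tmt_i}\uu_{\varepsilon}\to\partial_{\tmt_i}\uu_0$ in $L^2$ (using $d\pi_{\jM}(\uu_0+\varepsilon s\tmmathbf{d}_0)\to P_{\uu_0}$ and $P_{\uu_0}\partial_{\tmt_i}\uu_0=\partial_{\tmt_i}\uu_0$), and $\varepsilon^{-1}\partial_s\uu_{\varepsilon}=d\pi_{\jM}(\uu_0+\varepsilon s\tmmathbf{d}_0)\tmmathbf{d}_0\to\tmmathbf{d}_0$ in $L^2$; in particular $\uu_{\varepsilon}\to\uu_0$ strongly in $H^1$. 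By dominated convergence (again $\mathfrak{h}_{i,\varepsilon},\sqrt{\mathfrak{g}_{\varepsilon}}\to1$ uniformly),
\[
\mathcal{E}_{\calN}^{\varepsilon}(\uu_{\varepsilon})\ \longrightarrow\ \tfrac12\int_{\calN}\Big(\textstyle\sum_{i=1}^{n-1}\big|\partial_{\tmt_i}\uu_0+\Aop(\uu_0)\tmt_i\big|^2+\big|\tmmathbf{d}_0+\Aop(\uu_0)\tmmathbf{n}_{\N}\big|^2\Big)\,\mathd\xi\,\mathd s ,
\]
and since $\tmmathbf{d}_0+\Aop(\uu_0)\tmmathbf{n}_{\N}=\big(\Aop(\uu_0)\tmmathbf{n}_{\N}\cdot\tmmathbf{n}_{\jM}(\uu_0)\big)\tmmathbf{n}_{\jM}(\uu_0)$ by the choice of $\tmmathbf{d}_0$, this limit equals $\mathcal{E}_{\calN}(\uu_0)$, which closes the $\Gamma$-$\limsup$ inequality.

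The $\Gamma$-$\liminf$ half is essentially routine once Proposition~\ref{prop:forGammaliminf} is available: it rests only on weak $L^2$ lower semicontinuity and the uniform estimates of Lemma~\ref{eq:lemmametricfactor}. The genuine obstacle is the recovery sequence: the naive competitor fails, and the corrected one must be engineered so that its normal derivative blows up at exactly the rate $\varepsilon^{-1}$ with the prescribed tangential limit $\tmmathbf{d}_0=-P_{\uu_0}(\Aop(\uu_0)\tmmathbf{n}_{\N})$ while still taking values in $\jM$; this is precisely where the regularity of the nearest-point projection and the Lipschitz--Sobolev chain rule enter. Once the candidate is fixed, all passages to the limit are standard consequences of dominated and weak convergence.
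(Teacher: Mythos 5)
Your proof is correct and takes essentially the same route as the paper: the $\Gamma$-$\liminf$ via Proposition~\ref{prop:forGammaliminf} and weak lower semicontinuity, with the optimal normal component of $\tmmathbf{d}_0 + \Aop (\uu_0) \tmmathbf{n}_{\N}$ obtained by orthogonal splitting (the paper gets the same value through a pointwise constrained minimization and its Euler--Lagrange equation), and the identical recovery sequence $\uu_{\varepsilon}^{\ast} = \pi_{\jM}( \uu_0 + \varepsilon s \tmmathbf{d}_0)$ with $\tmmathbf{d}_0 = -\left( \mathsf{I} - \tmmathbf{n}_{\jM}(\uu_0) \otimes \tmmathbf{n}_{\jM}(\uu_0) \right) \Aop (\uu_0) \tmmathbf{n}_{\N}$, which coincides with \eqref{eq:defd0}.
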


\begin{proof}
  We split the proof into two steps. We start by addressing the
  $\Gamma$-$\liminf$ inequality for $\left( \mathcal{E}_{\calN}^{\varepsilon}
  \right)_{\varepsilon \in I_{\delta+}}$.
  
  {\noindent}{\tmstrong{Step~1~[$\Gamma$-$\text{liminf}$ inequality].}} By
  Proposition~\ref{prop:forGammaliminf}, we have that for the identification
  of the $\Gamma$-limit of $\left( \mathcal{E}_{\calN}^{\varepsilon}
  \right)_{\varepsilon \in I_{\delta}}$ it is sufficient to consider those
  families of $H^1( \calN, \jM)$ functions that weakly converge
  to a $0$-homogeneous element $\uu_0 \in H^1( \calN, \jM)$,
  i.e., to an element $\uu_0$ of the form $\uu_0 (\xi, \sss) =
  \chi_I \left( \sss \right) \widetilde{\uu}_0 (\xi)$ for some
  $\widetilde{\uu}_0 \in H^1( \N, \jM)$, i.e., not depending on
  the $\sss$ variable. In the following, with a slight abuse of notation, we
  write $\uu_0 (\xi)$ instead of $\widetilde{\uu}_0 (\xi)$.
  
  Taking into account the lower semicontinuity of the norm, for any
  $\uu_{\varepsilon} \rightharpoonup \uu_0$ in $H^1( \calN, \jM)$, with $\uu_0$ of the type \eqref{eq:tensorformu0}, we get
  ({\tmabbr{cf.}}~\eqref{eq:rewriteG1})
  \begin{align}
    \liminf_{\varepsilon \rightarrow 0} \mathcal{E}_{\calN}^{\varepsilon}
    \left( \uu_{\varepsilon} \right) & \geqslant  \sum_{i = 1}^{n - 1}
    \int_{\N} \left| \partial_{\tmt_i (\xi)} \uu_0 (\xi) + \Aop \left( \uu_0
    (\xi) \right) \bp \tmt_i (\xi) \right|^2 \mathd \xi \nonumber\\
    &   \hspace{4em} + \frac{1}{2} {\int_{\N \times I}}_{} \left|
    \tmmathbf{d}_0 (\xi, \sss) + \Aop \left( \uu_0 (\xi) \right) \bp
    \tmmathbf{n}_{\N} (\xi) \right|^2 \mathd \xi \mathd \sss . \nonumber
  \end{align}
  To shorten the notation, it is convenient to introduce the following
  notation. For every $\sigma \in \jM$ we denote by $\Aop_{\xi} (\sigma)$ the
  restriction of $\Aop (\sigma)$ to the tangent space of $\N$ at $\xi$
  represented by the matrix $\Aop_{\xi} (\sigma) = \left( \Aop (\sigma) \bp
  \tmt_1 (\xi), \ldots, \Aop (\sigma) \bp \tmt_{n - 1} (\xi) \right) \in
  \RR^{(n - 1) \times m}$. The previous estimate then reads as
  \begin{equation}
    \liminf_{\varepsilon \rightarrow 0} \mathcal{E}_{\calN}^{\varepsilon}
    \left( \uu_{\varepsilon} \right) \geqslant \int_{\N} \left|  \grad_{\xi} 
    \uu_0 (\xi) + \Aop_{\xi} \left( \uu_0 (\xi) \right) \right|^2 \mathd \xi +
    \Rop [\tmmathbf{d}_0] \label{eq:estimateRd0}
  \end{equation}
  with
  \begin{equation}
    \Rop [\tmmathbf{d}_0] \assign \frac{1}{2} {\int_{\N \times I}}_{} \left|
    \tmmathbf{d}_0 (\xi, \sss) + \Aop \left( \uu_0 (\xi) \right) \bp
    \tmmathbf{n}_{\N} (\xi) \right|^2 \mathd \xi \mathd \sss .
  \end{equation}
  Next, we minimize (pointwise) the integrand of $\Rop [\tmmathbf{d}_0]$
  {\tmem{under the constraint}} that $\tmmathbf{d}_0 (\xi, \sss) \cdot
  \tmmathbf{n}_{\jM} (\tmmathbf{u}_0 (\xi)) = 0$, i.e., we look for
  \begin{equation}
    \min_{\tmmathbf{d}_0 \cdot \tmmathbf{n}_{\jM} (\tmmathbf{u}_0) = 0} \left|
    \tmmathbf{d}_0 + \Aop (\tmmathbf{u}_0) \tmmathbf{n}_{\N} \right|^2 .
  \end{equation}
  This leads to the Euler--Lagrange equation $\tmmathbf{d}_0 + \Aop
  (\tmmathbf{u}_0) \tmmathbf{n}_{\N} = \left( \Aop (\tmmathbf{u}_0)
  \tmmathbf{n}_{\N} \cdot \tmmathbf{n}_{\jM} (\tmmathbf{u}_0) \right)
  \tmmathbf{n}_{\jM} (\tmmathbf{u}_0)$ which, in particular, shows that for
  every $(\xi, \sss) \in \calN$ the minimal vector $\tmmathbf{d}_0 (\xi,
  \sss)$ does not depend on the $\sss$ variable and is given by
  \begin{equation}
    \tmmathbf{d}_0 (\xi) \assign \left( \left[ \tmmathbf{n}_{\jM}
    (\tmmathbf{u}_0) \otimes \tmmathbf{n}_{\jM} (\tmmathbf{u}_0) \right] -
    \mathsf{I} \right) \Aop (\tmmathbf{u}_0) \tmmathbf{n}_{\N},
    \label{eq:defd0}
  \end{equation}
  with $\mathsf{I}$ denoting the identity operator.
  
  The previous relation leads to the following expression of the minimal
  energy density
  \begin{equation}
    \left| \tmmathbf{d}_0 (\xi, s) + \Aop (\tmmathbf{u}_0 (\xi))
    \tmmathbf{n}_{\N} (\xi) \right|^2 \eqs \left( \left( \Aop^{\T}
    (\tmmathbf{u}_0 (\xi)) \tmmathbf{n}_{\jM} (\tmmathbf{u}_0 (\xi)) \right)
    \cdot \tmmathbf{n}_{\N} (\xi) \right)^2
  \end{equation}
  and, therefore, from \eqref{eq:estimateRd0}, we infer that
  \begin{equation}
    \liminf_{\varepsilon \rightarrow 0} \mathcal{E}_{\calN}^{\varepsilon}
    \left( \uu_{\varepsilon} \right) \geqslant \int_{\N} \left|  \grad_{\xi} 
    \uu_0 (\xi) + \Aop_{\xi} \left( \uu_0 (\xi) \right) \right|^2 \mathd \xi +
    \int_{\N} \left( \Aop \left( \uu_0 (\xi) \right) \tmmathbf{n}_{\N} (\xi)
    \cdot \tmmathbf{n}_{\jM} \left( \uu_0 (\xi) \right) \right)^2 \mathd \xi .
    \label{eq:gammaliminf}
  \end{equation}
  This concludes the proof of the $\Gamma$-$\liminf$ inequality.
  
  {\noindent}{\tmstrong{Step~2~[$\Gamma$-$\text{limsup}$ inequality].}} We now
  address the existence of a recovery sequence, i.e., that the lower bound
  \eqref{eq:gammaliminf} is indeed optimal. To this end, it is sufficient to
  show that if $\uu_0 \in H^1( \calN, \jM)$ is independent of the
  $\sss$-variable, then there exists a sequence $\left(
  \uu^{\ast}_{\varepsilon} \right)_{\varepsilon \in I_{\delta+}}$ of
  functions in $H^1( \calN, \jM)$ such that
  $\uu^{\ast}_{\varepsilon} \rightarrow \uu_0$ strongly in $H^1( \calN,
  \jM)$. Indeed, if this is the case, then
  \begin{equation}
    \lim_{\varepsilon \rightarrow 0} \mathcal{E}_{\calN}^{\varepsilon} \left(
    \uu^{\ast}_{\varepsilon} \right) =\mathcal{E}_{\calN} (\uu_0 )
    . \label{eq:gammalimsupE}
  \end{equation}
  For $\uu_0 \in H^1( \calN, \jM)$ and independent of the
  $\sss$-variable, we claim that
  \begin{equation}
    \uu^{\ast}_{\varepsilon} (\xi, \sss) \assign \pi_{\jM} \left(
    \uu_0 (\xi) + \varepsilon s\tmmathbf{d}_0 (\xi) \right)
    \label{eq:recsequencebuildfrom}
  \end{equation}
  with $\tmmathbf{d}_0$ defined by \eqref{eq:defd0}, gives the recovery
  sequence. Here $\pi_{\jM}$ is the nearest point projection on $\jM$ defined
  in \eqref{eq:npprojection} and, therefore $\uu^{\ast}_{\varepsilon} \left(
  \xi, \sss \right)$ is the projection on $\jM$ of a suitable (and small)
  perturbation of $\uu_0$ along the tangent space of $\jM$ at $\uu_0 (\xi)$.
  
  We point out that $\uu^{\ast}_{\varepsilon}$ is well-defined because for
  almost every $( \xi, \sss ) \in \calN$ we have that
  \begin{align}
    \left| d_{\jM} ( \uu_0 (\xi) + \varepsilon s\tmmathbf{d}_0 (\xi)) \right| & \eqs  \left| d ( \uu_0 (\xi) + \varepsilon
    s\tmmathbf{d}_0 (\xi), \jM) \right| \nonumber\\
    & \leqslant  \left| d \left( \uu_0 (\xi) + \varepsilon s\tmmathbf{d}_0
    (\xi), \uu_0 (\xi) \right) \right| \nonumber\\
    & \leqslant  \varepsilon | \tmmathbf{d}_0 (\xi) |, 
  \end{align}
  with $| \tmmathbf{d}_0 (\xi) |$ bounded, uniformly in $\xi \in \N$, by some
  constant which depends only on $c_{\Aop}$. Therefore, for $\varepsilon$
  sufficiently small, $\uu_0 (\xi) + \varepsilon s\tmmathbf{d}_0 (\xi)$ is in
  a tubular neighborhood of $\N$ where the unique nearest point projection
  $\pi$ is defined.
  
  To evaluate $D \uu_{\varepsilon}^{\ast}$ we proceed as follows. For any $y$
  in a tubular neighborhood $O_{\delta}$ of $\jM$ we have
  \begin{equation}
    y = \pi_{\jM} (y) + d_{\jM} (y) \left( \tmmathbf{n}_{\jM} \circ \pi_{\jM}
    \right) (y) .
  \end{equation}
  Therefore, if we set $\tilde{\tmmathbf{n}}_{\jM} (y) \assign \left(
  \tmmathbf{n}_{\jM} \circ \pi_{\jM} \right) (y)$, given that
  $\tilde{\tmmathbf{n}}_{\jM} = \tilde{\tmmathbf{n}}_{\jM} \circ \pi_{\jM}$,
  passing to the tangent maps in the previous relation, we have that
  $\mathsf{I} = D \pi_{\jM} (y) + \tilde{\tmmathbf{n}}_{\jM} (y) \otimes
  \tilde{\tmmathbf{n}}_{\jM} (y) + d_{\jM} (y) D \tilde{\tmmathbf{n}}_{\jM}
  (y) D \pi_{\jM} (y)$ with $\mathsf{I}$ the identity operator. Hence, the
  following relation holds
  \begin{equation}
    \mathsf{I} - \tilde{\tmmathbf{n}}_{\jM} (y) \otimes
    \tilde{\tmmathbf{n}}_{\jM} (y)= \left[ \mathsf{I} + d_{\jM} (y) D
    \tilde{\tmmathbf{n}}_{\jM} (y) \right] \bp D \pi_{\jM} (y), 
  \end{equation}
  from which we get that
  \begin{equation}
    D \pi_{\jM} (y) \eqs \left[ \mathsf{I} + d_{\jM} (y) D
    \tilde{\tmmathbf{n}}_{\jM} (y) \right]^{- 1} \bp \left[ \mathsf{I} -
    \tilde{\tmmathbf{n}}_{\jM} (y) \otimes \tilde{\tmmathbf{n}}_{\jM} (y)
    \right] \label{eq:actsasidentitytan}
  \end{equation}
  which is continuous in $y \in O_{\delta}$ because of $d_{\jM} (y)$ being
  small. Therefore, with $\tmmathbf{\eta}_{\varepsilon} \left( \xi, \sss
  \right) \assign \uu_0 (\xi) + \varepsilon \sss \tmmathbf{d}_0 (\xi)$, we
  have that
  \begin{align}
    \partial_{\tmt_i (\xi)} \uu^{\ast}_{\varepsilon} (\xi, \sss)
    & \eqs  D \pi_{\jM} \left( \tmmathbf{\eta}_{\varepsilon} \left( \xi, \sss
    \right) \right) \bp \partial_{\tmt_i (\xi)} \tmmathbf{\eta}_{\varepsilon}
    (\xi, \sss), \\
    \partial_{\sss} \uu^{\ast}_{\varepsilon} (\xi, \sss) & \eqs 
    \varepsilon D \pi_{\jM} \left( \tmmathbf{\eta}_{\varepsilon} ( \xi,
    \sss ) \right) \bp \tmmathbf{d}_0 (\xi) . 
  \end{align}
  But then, since for every $\sigma \in \jM$ the map $D \pi_{\jM} (\sigma)$
  acts as the identity on the tangent space $T_{\sigma} \jM$ of $\jM$ at
  $\sigma$, taking into account \eqref{eq:actsasidentitytan}, we get the
  following relations
  \begin{alignat}{3}
    \uu^{\ast}_{\varepsilon} (\xi, \sss) & \rightarrow
    \uu_0 (\xi) \hspace{1.2em} && \text{ strongly in } L^2(\calN, \jM),  \\
    \grad_{\xi} \uu^{\ast}_{\varepsilon} (\xi, \sss) &
    \rightarrow  \grad_{\xi} \uu_0 (\xi) \hspace{1.2em} && \text{ strongly in }
    L^2(\calN, \left( T \jM)^{n - 1} \right),\\
    \partial_{\sss} \uu^{\ast}_{\varepsilon} (\xi, \sss) &
    \rightarrow  0 \hspace{1.2em} && \text{ strongly in } L^2(\calN, T
    \jM), \\
    \varepsilon^{- 1} \partial_{\sss} \uu^{\ast}_{\varepsilon} \left( \xi,
    \sss \right) & \rightarrow  \tmmathbf{d}_0 (\xi) \hspace{1.2em} && \text{
    strongly in } L^2(\calN, T \jM). 
  \end{alignat}
  In particular, we have that $\uu^{\ast}_{\varepsilon} \left( \xi, \sss
  \right) \rightarrow \uu_0 (\xi)$ strongly in $H^1( \calN, \jM)$
  and, therefore, \eqref{eq:gammalimsupE} holds.
\end{proof}

\subsection{Strong convergence of
minimizers}\label{subsec:laststepstrong}Assertion
\eqref{eq:firstordergammadevelopforF}, as well as the {\tmem{weak}}
convergence in $H^1( \calN, \jM)$ of a minimizing family for $(
\mathcal{E}_{\calN}^{\varepsilon} )_{\varepsilon \in I_{\delta+}}$ to a
minimum point of
$\mathcal{E}_{\calN}$, \ is nothing but the fundamental theorem of
$\Gamma$-convergence concerning the variational convergence of minimum
problems ({\tmabbr{cf.}}~{\cite{dal1993introduction}}), which holds under the
equicoercivity result we proved in Subsection
\ref{subsec:equicoercivityffprime}. Therefore, it remains to prove that the
convergence is (up to a subsequence) {\tmem{strong}} in $H^1( \calN, \jM)$.

To this end, we observe that, by assumptions, there exists $\uu_0 \in H^1
( \calN, \jM)$, not depending on the $\sss$-variable, such that
$\uu_{\varepsilon} \rightharpoonup \uu_0$ weakly in $H^1( \calN, \jM)$ and $\partial_{\sss} \uu_{\varepsilon} \rightarrow 0$ strongly in
$L^2(\calN, T \jM)$. By the $\Gamma$-liminf inequality, the
strong convergence of $\uu_{\varepsilon} \rightarrow \uu_0$ in $L^2(
\calN, \jM)$, and the minimality of $\uu_{\varepsilon}$, we get that
\begin{equation}
  \mathcal{E}_{\calN} (\uu_0 ) \leqslant \liminf_{\varepsilon
  \rightarrow 0} \mathcal{E}_{\calN}^{\varepsilon} \left( \uu_{\varepsilon}
  \right) \leqslant \limsup_{\varepsilon \rightarrow 0}
  \mathcal{E}_{\calN}^{\varepsilon} \left( \uu_{\varepsilon} \right) \leqslant
  \lim_{\varepsilon \rightarrow 0} \mathcal{E}_{\calN}^{\varepsilon} \left(
  \uu_{\varepsilon}^{\ast} \right) \eqs \mathcal{E}_{\calN} \left( \uu_0
  \right),
\end{equation}
where $\left( \uu_{\varepsilon}^{\ast} \right)_{\varepsilon \in I_{\delta+}}$
is the family built from $\uu_0$ as in \eqref{eq:recsequencebuildfrom}.
Therefore, we conclude that if $\left( \uu_{\varepsilon} \right)_{\varepsilon
\in I_{\delta+}}$ is a minimizing family associated with $\left(
\mathcal{E}_{\calN}^{\varepsilon} \right)_{\varepsilon \in I_{\delta+}}$,
then $\uu_{\varepsilon} \rightharpoonup \uu_0$ weakly in $H^1( \calN,
\jM)$ for some $\uu_0$ which does not depend on the $\sss$-variable
and, moreover, $\lim_{\varepsilon \rightarrow 0}
\mathcal{E}_{\calN}^{\varepsilon} \left( \uu_{\varepsilon} \right)
=\mathcal{E}_{\calN} (\uu_0 )$. But this last relation, in
expanded form, is equivalent to the relation
\begin{equation}
  \frac{1}{2} \int_{\calN} \left|  \grad_{\xi}  \uu_0 (\xi) + \Aop_{\xi}
  \left( \uu_0 (\xi) \right) \right|^2 \mathd \xi \mathd \sss \eqs
  \lim_{\varepsilon \rightarrow 0}  \frac{1}{2} \int_{\calN} \left|
  \grad_{\xi}  \uu_{\varepsilon} (\xi, \sss) + \Aop_{\xi} \left(
  \uu_0 (\xi) \right) \right|^2 \mathd \xi \mathd \sss .
  \label{eq:strongconvnorms}
\end{equation}
From \eqref{eq:strongconvnorms} we deduce the convergence of the norms
$\| \uu_{\varepsilon}\|_{H^1( \calN, \jM)}
\rightarrow \| \uu_0 \|_{H^1( \calN, \jM)}$ which
together with the weak convergence $\uu_{\varepsilon} \rightharpoonup \uu_0$
in $H^1( \calN, \jM)$, assures strong convergence in $H^1(\calN, \jM)$.

\section{Applications to Micromagnetics}\label{sec:applicationsMicromag}

Because of the growing interest in spintronics applications, magnetic
skyrmions are currently the focus of considerable research activity ranging
from mathematics to physics and materials science. These chiral structures can
be found in a wide range of experimental conditions. This section demonstrates
how our curved thin film analysis can account for various situations that
might arise when ferromagnetic crystals lack inversion symmetry, and the
Dzyaloshinskii-Moriya interaction (DMI) can twist the otherwise uniform
ferromagnetic state.

Generally speaking, due to material defects or anisotropy in saturation
magnetization~{\cite{Venkatesan_2004}}, the loss of the $\Stwo^2$-valued
constraint is possible. As we show below
({\tmabbr{cf.}}~\eqref{eq:1stcase2}), even in the planar setting, extra
contributions to those reported in {\cite{Davoli2022}} must be taken into
consideration every time the magnetization vector is not precisely
$\Stwo^2$-valued. A specific instance of our findings provides a model for
curved thin films in the presence of bulk DMI.

In addition, we analyze the curved thin-film limit associated with interfacial
DMI. This effect occurs when two ferromagnetic materials with different
crystal structures are separated by an interface.

Finally, we consider the case when the temperature of the ferromagnet is not
uniform and, as a result, the (temperature-dependent) saturation magnetization
cannot be assumed constant. We show how to use Theorem~\ref{thm:maingen} to
cover this situation.

\subsection{Isotropic Bulk DMI}\label{subsec:isotropicbulkDMI}Isotropic bulk
DMI arises when Dzyaloshinskii vectors $(\tmmathbf{d}_i)_{i = 1}^3$ in
\eqref{eq:H1asymDMI} are the elements of the standard basis
$(\tmmathbf{e}_i)_{i = 1}^3$ of $\RR^3$. In this circumstance, the
perturbation $\Aop$ takes the form $\Aop \left( \m \right) \eqs \kappa \left(
\bullet \, \times \m \right)$, $\kappa \in \RR$. Theorem 1 from
{\cite{Davoli2022}} is about this setting in the particular case where $N$ is
a planar surface and $M = \Stwo^2$. More generally, when $\jM, \N$ are
two-dimensional surfaces in $\RR^3$, the energy functional
\eqref{eq:mainenfunc} associated with the isotropic bulk DMI reads as
\begin{equation}
  \mathcal{G}_{\varepsilon} \left( \m \right) \assign \frac{1}{2 \varepsilon} 
  \sum_{i = 1}^3 \int_{\Om_{\varepsilon}} |\mathfrak{d}_i \m (x) |^2
  \hspace{0.17em} \mathrm{d} x,
\end{equation}
and is defined for every $\m \in H^1 (\Om_{\varepsilon}, \jM)$. Here, for $i =
1, 2, 3$, the quantities $\mathfrak{d}_i \m \assign \partial_i \m + \kappa
(\tmmathbf{e}_i \times \m)$ are usually referred to as {\tmem{helical
derivatives}}. The curved thin-film limit, as identified in
Theorem~\ref{thm:main}, is given by
\begin{align}
  \mathcal{E}_{\N} \left( \m \right)  \assign & \int_{\N} \left|  \grad_{\xi}
  \m (\xi) + \Aop_{\xi} (\m (\xi)) \right|^2 + \left( \Aop
  (\m (\xi)) \tmmathbf{n}_{\N} (\xi) \cdot \tmmathbf{n}_{\jM}
  (\m (\xi)) \right)^2 \mathd \xi \nonumber\\
  =&  \sum_{i = 1}^2 \int_{\N} \left| \partial_{\tmt_i (\xi)} \m (\xi) +
  \kappa \left( \tmt_i (\xi) \times \m (\xi) \right) \bp \right|^2 \mathd \xi
  \nonumber\\
  &   \qquad \qquad \qquad \qquad + \kappa^2 \int_{\N} \left( \left(
  \tmmathbf{n}_{\jM} ( \m (\xi) ) \times \m (\xi) \right) \cdot
  \tmmathbf{n}_{\N} (\xi) \right)^2 \mathd \xi .  \label{eq:1stcase}
\end{align}
When $\N \subseteq \RR^2 \times \{ 0 \}$ is a planar surface
({\tmabbr{cf.}}~Remark~\ref{rmk:moregensurf}), one has $\tmmathbf{n}_{\N}
(\xi) =\tmmathbf{e}_3$ and taking the tangential derivatives in the direction
of the standard basis of $\RR^2$, one obtains from \eqref{eq:1stcase} that
\begin{equation}
  \mathcal{E}_{\N} \left( \m \right) = \sum_{i = 1}^2 \int_{\N} \left|
  \mathfrak{d}_i \m (\xi) \bp \right|^2 \mathd \xi + \kappa^2 \int_{\N} \left(
  \left( \left. \m (\xi) \times \tmmathbf{n}_{\jM} \left( \m (\xi) \right.
  \right) \right) \cdot \tmmathbf{e}_3 \right)^2 \mathd \xi .
  \label{eq:1stcase2}
\end{equation}
Compared to Theorem 1 from {\cite{Davoli2022}}, we observe here that if $\m$
is $\Stwo^2$-valued, i.e., if $\jM = \Stwo^2$, then $\left. \m (\xi) \times
\tmmathbf{n}_{\Stwo^2} \left( \m (\xi) \right. \right) = \m (\xi) \times \m
(\xi) = 0$, and the limiting energy, as we already know, reduces to
\begin{equation}
  \mathcal{E}_{\N} \left( \m \right) = \sum_{i = 1}^2 \int_{\N} \left|
  \mathfrak{d}_i \m (\xi) \bp \right|^2 \mathd \xi .
\end{equation}
In other words, the second term in \eqref{eq:1stcase2} vanishes for
$\Stwo^2$-valued vector fields defined on a planar surface. Actually, the
second term in \eqref{eq:1stcase2} also vanishes when $\m$ is defined on a
closed surface $\N$ provided that $\m$ is still $\Stwo^2$-valued. However, and
this is an important point, when for physical reasons $\m$ cannot be
considered $\Stwo^2$-valued (e.g., because of anisotropy in the saturation
magnetization), then a correction must be taken into account, and this is
expressed by the second term in \eqref{eq:1stcase2}.

\subsection{Isotropic interfacial DMI}Let $\jM, \N$ be
two-dimensional surfaces in $\RR^3$. We are interested in the case where the
interfacial DMI occurs in the direction of the normal to $\N$. For that, we
observe that in terms of the moving frame
\begin{equation}
\left( \tmt_1 (\xi), \tmt_2 (\xi),
\tmmathbf{\tau}_3 (\xi) : =\tmmathbf{n}_{\N} (\xi) \right),
\end{equation}
the expression of
$\mathcal{H}_{\tmop{DM}}$ in \eqref{eq:H1asymDMI} can be written under the
form
\begin{equation}
  \mathcal{H}_{\tmop{DM}} \left( \m \right) = \sum_{i = 1}^3 \int_{\Om}
  \partial_{\tau_i (\xi)} \m (x) \cdot \left( \tmmathbf{d}_i (\xi) \times \m
  (x) \right) \mathd x \label{eq:H1asymDMItan}
\end{equation}
with $\xi \assign \pi_{\N} (x)$ and the Dzyaloshinskii vectors
$(\tmmathbf{d}_i (\xi))_{i = 1}^3$ depending on $\xi$. Hence, up to an
additive term, uninfluential for our purposes, the expression in
\eqref{eq:mainfunctionalinterest0} can be rewritten under the form
\begin{align}
  \mathcal{H}_{\Om} \left( \m \right) & \eqs \frac{1}{2}  \sum_{i = 1}^3
  \int_{\Om} | \partial_{\tau_i (\xi)} \m (x) +\tmmathbf{d}_i (\xi) \times \m
  (x) |^2 \mathd x \\
  & =  \frac{1}{2}  \sum_{i = 1}^3 \int_{\Om} | \mathD \m (x) T (\xi) + \Mop
  \left( \xi, \m (x) \right) |^2 \mathd x \, , \label{eq:tempcurvDMIint1}
\end{align}
with $T (\xi)$ the orthogonal operator mapping the standard basis of $\RR^3$
into the moving frame $\left( \tmt_1 (\xi), \tmt_2 (\xi), \tmmathbf{\tau}_3
(\xi) : =\tmmathbf{n}_{\N} (\xi) \right)$ and $\Mop \left( \xi, \m \right) =
\left( \tmmathbf{d}_1 (\xi) \times \m, \tmmathbf{d}_2 (\xi) \times \m,
\tmmathbf{d}_3 (\xi) \times \m \right)$. In the presence of curved interfacial
DMI, the Dzyaloshinskii vectors have the form 
\begin{equation}
\tmmathbf{d}_i (\xi) \assign
\kappa \left( \tmmathbf{n}_{\N} (\xi) \times \tmmathbf{\tau}_i (\xi) \right)\,
\end{equation}
so that (note that $\tmmathbf{d}_3 (\xi) =\tmmathbf{0}$) for $i = 1, 2$, there
holds
\begin{equation}
  \tmmathbf{d}_i (\xi) \times \m (x) \eqs \kappa \left[ \left( \m (x) \cdot
  \tmmathbf{n}_{\N} (\xi) \right) \tmmathbf{\tau}_i (\xi) - (\tmmathbf{m} (x)
  \cdot \tmmathbf{\tau}_i (\xi)) \tmmathbf{n}_{\N} (\xi) \right] .
\end{equation}
Equivalently, we have $\Mop \left( \xi, \m \right) = \Aop \left( \xi, \m
\right) \circ T (\xi)$ with $\Aop \left( \xi, \m \right)$ being the
matrix-valued function
\begin{equation}
  \Aop \left( \xi, \m \right) \assign \kappa \left[ \left( \bullet \, \otimes
  \tmmathbf{n}_{\N} (\xi) \right) - \left( \tmmathbf{n}_{\N} (\xi) \otimes
  \bullet \right) \right] \m . \label{eq:tempcurvDMIint2}
\end{equation}
Overall, from \eqref{eq:tempcurvDMIint1} and \eqref{eq:tempcurvDMIint2} and
the invariance of the norm under the orthogonal group, we get that
\begin{equation}
  \mathcal{H}_{\Om} \left( \m \right) = \frac{1}{2}  \sum_{i = 1}^3 \int_{\Om}
  | \mathD \m (x) + \Aop \left( \xi, \m \right) |^2 \mathd x.
\end{equation}
Coming back to the notation used for our asymptotic regime, the energy
functional of interest has the form ($\xi = \pi_{\N} (x)$)
\begin{equation}
  \tilde{\mathcal{G}}_{\varepsilon} \left( \m \right)  \assign  \frac{1}{2
  \varepsilon}  \sum_{i = 1}^3 \int_{\Om_{\varepsilon}} | \mathD \m (x) + \Aop
  \left( \xi, \m \right) |^2 \mathd x 
\end{equation}
and, again, it is defined for every $\m \in H^1 (\Om_{\varepsilon}, \jM)$.
Note that now $\Aop \left( \xi, \m \right)$ depends also on $\xi$, but the
analysis we performed to establish Theorem~\ref{thm:main} extends to this
setting (see Remark~\ref{rmk:curvinterf} and Theorem~\ref{thm:maingen}). The
curved thin-film limit, as identified in Theorem~\ref{thm:maingen}, reads as
\begin{align}
  \tilde{\mathcal{E}}_{\N} \left( \m \right) & \eqs \sum_{i = 1}^2
  \int_{\N} \left| \partial_{\tmmathbf{\tau}_i (\xi)} \m (\xi) + \Aop \left(
  \xi, \m (\xi) \right) \bp \tmt_i (\xi) \right|^2 \mathd \xi \nonumber\\
  &   \qquad \qquad \qquad \qquad + \int_{\N} \left( \Aop \left( \xi, \m
  (\xi) \right) \tmmathbf{n}_{\N} (\xi) \cdot \tmmathbf{n}_{\jM} \left( \m
  (\xi) \right) \right)^2 \mathd \xi  \label{eq:3rdcase0}\\
  & \eqs \sum_{i = 1}^2 \int_{\N} \left| \partial_{\tmt_i (\xi)} \m (\xi) +
  \kappa \left[ \left( \tmt_i (\xi) \otimes \tmmathbf{n}_{\N} (\xi) \right) -
  \left( \tmmathbf{n}_{\N} (\xi) \otimes \tmt_i (\xi) \, \right) \right] \m
  (\xi) \bp \right|^2 \mathd \xi .  \label{eq:3rdcase}
\end{align}
In fact, the second term in \eqref{eq:3rdcase0} vanishes because of $\Aop
\left( \xi, \m (\xi) \right) \tmmathbf{n}_{\N} (\xi) = 0$.

When $\N \subseteq \RR^2 \times \{ 0 \}$ is a flat surface
({\tmabbr{cf.}}~Remark~\ref{rmk:moregensurf}), one has $\tmmathbf{n}_{\N}
(\xi) =\tmmathbf{e}_3$. By taking the tangential
derivatives in the direction of the standard basis of $\RR^2$, we infer
from \eqref{eq:3rdcase} that
\begin{align}
  \tilde{\mathcal{E}}_{\N} \left( \m \right) & =  \sum_{i = 1}^2 \int_{\N} \left|
  \partial_i \m (\xi) + \kappa \left[ \left( \tmmathbf{e}_i \, \otimes
  \tmmathbf{e}_3 \right) - \left( \tmmathbf{e}_3 \otimes \tmmathbf{e}_i \,
  \right) \right] \m \bp \right|^2 \mathd \xi  \nonumber\\
  & \eqs  \int_{\N} \left| \nabla_{\xi}  \m \bp \right|^2 \mathd \xi + 2
  \kappa \int_{\N} m_3 \tmop{div}_{\xi}  \m - \m \cdot \grad_{\xi} m_3 \, \mathd
  \xi + \kappa^2 {\int_{\N}}  1 +
  m_3^2 (\xi) \, \mathd \xi.  \label{eq:2ndcase2}
\end{align}
The above formula is the classical expression for the interfacial DMI, which is
used in the analysis of magnetic skyrmions in planar thin films
{\cite{Muratov_2017}}.
We deduced \eqref{eq:2ndcase2} from \eqref{eq:3rdcase} and, therefore, from Theorem~\ref{thm:maingen}. 
It should be noted, however, that in the planar setting, invoking Theorem~\ref{thm:main} directly rather than its 
generalized counterpart stated in Theorem~\ref{thm:maingen} is sufficient to determine the thin-film limit.
In fact, when the surface is flat, the perturbation $\Aop$ does not depend on $\xi$.

\subsection{Anisotropic DMI}For completeness, we report the general case of
anisotropic DMI where the Dzyaloshinskii vectors $(\tmmathbf{d}_i)_{i = 1}^3$
are in general position. With $\jM, \N$ being two-dimensional surfaces in
$\RR^3$, the energy functional we are interested in reads as
\begin{equation}
  \mathcal{G}_{\varepsilon} \left( \m \right) = \frac{1}{2 \varepsilon}
  \int_{\Om_{\varepsilon}} \left| \mathD \m (x) + \Aop \left( \m (x) \right)
  \right|^2 \mathd x \eqs \frac{1}{2 \varepsilon} \sum_{i = 1}^3
  \int_{\Om_{\varepsilon}} \left| \partial_i \m (x) +\tmmathbf{d}_i \times \m
  (x) \right|^2 \mathd x, 
\end{equation}
with $\Aop \left( \m \right) = \Mop \left( \m \right)  \Jop$. Here, as in
\eqref{eq:mainfunctionalinterest}, $\Jop$ represents the linear operator that
maps the standard basis $(\tmmathbf{e}_i)_{i = 1}^3$ into the vectors
$(\tmmathbf{d}_i)_{i = 1}^3$, while $\Mop \left( \m \right) \assign \bullet \,
\times \m$ represents the $\m$-dependent {\tmem{antisymmetric}} linear
operator acting as the cross-product with $\m$.

By Theorem~\ref{thm:main}, we infer that in the curved thin-film limit,
anisotropic DMI contributions result in the limiting energy functional
\begin{equation}
  \tilde{\mathcal{E}}_{\calN} \left( \m \right) = \sum_{i = 1}^2 \int_{\N}
  \left| \partial_{\tmmathbf{\tau}_i (\xi)} \m (\xi) + \tilde{\tmmathbf{d}}_i
  (\xi) \times \m (\xi) \right|^2 + \int_{\N} \left( \Aop \left( \m (\xi)
  \right) \tmmathbf{n}_{\N} (\xi) \cdot \tmmathbf{n}_{\jM} \left( \m (\xi)
  \right) \right)^2 \mathd \xi  \label{eq:anisotropicDMI}
\end{equation}
with $\tilde{\tmmathbf{d}}_i (\xi) \assign \Jop \tmt_i (\xi)$. Observe that
when $M = \Stwo^2$, the second term in \eqref{eq:anisotropicDMI} vanishes.
Indeed, we have
\begin{equation}
  \Aop (\m (\xi)) \tmmathbf{n}_{\N} (\xi) \cdot
  \tmmathbf{n}_{\jM} (\m (\xi)) = \left[ \Mop \left( \m \right)
  \Jop \right] \tmmathbf{n}_{\N} (\xi) \cdot \m (\xi) = 0,
\end{equation}
because the image of $\Mop \left( \m \right) \Jop$ is orthogonal to $\m$.

\subsection{The nonuniform temperature setting}We discuss here the case when
the temperature of the ferromagnet is not uniform and, as a result, the
(temperature-dependent) saturation magnetization cannot be assumed constant.
To that purpose, we recall that in the variational theory of micromagnetism
({\tmabbr{cf.}} {\cite{LandauA1935,BrownB1963}}), systemized by Brown in the
60s, exchange interactions are described through the order parameter $\M$,
called the {\tmem{magnetization vector}}. The magnetization vector $\M$ of a
rigid ferromagnetic body, filling a domain $\Om \subseteq \RR^3$, \ is a
vector field satisfying the so-called {\tmem{saturation constraint}}, that is,
$| \M | = M_s$ in $\Om$ for a positive constant $M_s \geqslant 0$. The
{\tmem{saturation magnetization}} $M_s \assign M_s (T)$ is temperature
dependent and vanishes when $T$ exceeds the so-called {\tmem{Curie
temperature}} $T_c$, whose specific value depends on the crystal type. Mean
field theory predicts that the value of $M_s$ can be considered almost
constant in {\Om} when $T \ll T_c$ (see {\cite{Hubert1998,Kuz_min_2005}}). As a
result, it is customary to express the magnetization in the form $\M \assign
M_s \m$, where $\m : \Om \rightarrow \Stwo^2$ is now a vector field taking
values in the unit sphere $\Stwo^2$ of $\RR^3$; we did the same when
introducing the micromagnetic energy functionals in
\eqref{eq:vsrintercsmicromag} and \eqref{eq:mainfunctionalinterest0}.

Returning to our asymptotic regime, let $\jM, \N$ be two-dimensional surfaces
in $\RR^3$. In general, when the prescribed temperature profile is not uniform
in $\Om_{\varepsilon}$, $M_s$ is a function defined on $\Om_{\varepsilon}$,
and the family of energy functionals we are interested in has the form
({\tmabbr{cf.}}~with \eqref{eq:mainenfuncgen})
\begin{equation}
  \tilde{\mathcal{G}}_{\varepsilon} \left( \m \right) = \frac{1}{2
  \varepsilon} \sum_{i = 1}^3 \int_{\Om_{\varepsilon}} \left| \partial_i
  \left( M_s (x) \m (x) \right) + M_s (x) \tmmathbf{d}_i \times \m (x)
  \right|^2 \mathd x.
\end{equation}
In what follows, we assume that $M_s$ is $0$-homogeneous along the normal
direction, i.e., that $M_s (x) = M_s (\xi)$, $\xi \assign \pi_{\N} (x)$, and
that $M_s$ is regular enough. We can rearrange $\mathcal{G}$ in the following
way, which is compatible with Theorem~\ref{thm:maingen} (see also
\eqref{eq:mainenfuncgen}):
\begin{align}
  \tilde{\mathcal{G}}_{\varepsilon} \left( \m \right) & = \frac{1}{2
  \varepsilon} \int_{\Om_{\varepsilon}} \left| M_s (x) \partial_i \m (x) +
  \partial_i M_s (x)  \m (x) +\tmmathbf{d}_i \times \m (x) \right|^2
  \nonumber\\
  & =  \frac{1}{2 \varepsilon} \int_{\Om_{\varepsilon}} \left| A (x) \mathD
  \m (x) + \Aop \left( x, \m (x) \right) \right|^2 \mathd x 
\end{align}
with $A (x) = M_s (x) I$ and $\Aop \left( x, \m \right) =\tmmathbf{m} \otimes
\nabla M_s (x) + \Mop \left( \m \right)  \Jop$. Here, as in
\eqref{eq:mainfunctionalinterest}, $\Jop$ represents the linear operator that
maps the standard basis $(\tmmathbf{e}_i)_{i = 1}^3$ into the vectors
$(\tmmathbf{d}_i)_{i = 1}^3$, whereas $\Mop \left( \m \right) \assign \bullet
\, \times \m$ represents the $\m$-dependent {\tmem{antisymmetric}} linear
operator acting as the cross-product with $\m$. More explicitly, in matrix
form, we have
\begin{equation}
  \Aop \left( x, \m \right) = \left( \partial_1 M_s (x) \m, \partial_2 M_s (x)
  \m, \partial_3 M_s (x) \m \right) + \left( \tmmathbf{d}_1 \times \m,
  \tmmathbf{d}_2 \times \m, \tmmathbf{d}_3 \times \m \right) .
\end{equation}
Invoking Theorem~\ref{thm:maingen}, we infer that the curved thin-film limit
reads as
\begin{align}
    \tilde{\mathcal{E}}_{\calN} \left( \m \right)& =  \sum_{i = 1}^2 \int_{\N}
    \left| M_s (\xi) \partial_{\tmmathbf{\tau}_i (\xi)} \m (\xi) + \Aop \left(
    \xi, \m (\xi) \right) \bp \tmt_i (\xi) \right|^2 \mathd \xi \notag\\
    & \qquad \qquad \qquad + \int_{\N} \left( \Aop \left( \xi, \m (\xi)
    \right) \tmmathbf{n}_{\N} (\xi) \cdot \tmmathbf{n}_{\jM} \left( \m (\xi)
    \right) \right)^2 \mathd \xi \quad
\end{align}
Observe that when $M = \Stwo^2$, we have
\begin{equation}
  \Aop \left( \xi, \m (\xi) \right) \tmmathbf{n}_{\N} (\xi) \cdot
  \tmmathbf{n}_{\jM} (\m (\xi)) = \left[ \tmmathbf{m} \otimes
  \nabla_{\xi} M_s (\xi) + \Mop \left( \m \right) \Jop \right]
  \tmmathbf{n}_{\N} (\xi) \cdot \m (\xi) = 0,
\end{equation}
because the image of $\Mop \left( \m \right) \Jop$ is orthogonal to $\m$ and
$\nabla_{\xi} M_s (\xi)$ is tangent at $\xi \in \N$. Hence, in the classical
micromagnetic setting ($M = \Stwo^2$), we get that the limiting model reads as
\begin{equation}
  \tilde{\mathcal{E}}_{\calN} \left( \m \right) = \sum_{i = 1}^2 \int_{\N}
  \left| \partial_{\tmmathbf{\tau}_i (\xi)} \left( M_s (\xi) \m (\xi) \right)
  + \tilde{\tmmathbf{d}}_i (\xi) \times \m (\xi) \right|^2 \mathd \xi,
\end{equation}
with $\tilde{\tmmathbf{d}}_i (\xi) \assign \Jop \tmt_i (\xi)$.

\section*{Acknowledgments}

{\tmabbr{G.~Di~F.}} acknowledges support from the Austrian Science Fund (FWF)
through the project {\tmem{Analysis and Modeling of Magnetic Skyrmions}}
(grant P-34609). G. Di F. also thanks TU Wien and MedUni Wien for their
support and hospitality. The work of {\tmabbr{V.~S.}} was supported by the
Leverhulme grant RPG-2018-438.

\bibliography{literature.bib}

\begin{thebibliography}{10}

\bibitem{Albeverio_1997}
{\sc S.~Albeverio, J.~Jost, S.~Paycha, and S.~Scarlatti}, {\em {A Mathematical
  Introduction to String Theory}}, Cambridge University Press, jul 1997.

\bibitem{Babadjian_2022}
{\sc J.-F. Babadjian, G.~Di~Fratta, I.~Fonseca, G.~Francfort, M.~Lewicka, and
  C.~Muratov}, {\em {The mathematics of thin structures}}, Quarterly of Applied
  Mathematics,  (2022).

\bibitem{Ball_2017}
{\sc J.~M. Ball}, {\em {Mathematics and liquid crystals}}, Molecular Crystals
  and Liquid Crystals, 647 (2017), pp.~1--27.

\bibitem{Becker_2006}
{\sc K.~Becker, M.~Becker, and J.~H. Schwarz}, {\em {String Theory and
  $M$-Theory}}, Cambridge University Press, dec 2006.

\bibitem{Bethuel_2017}
{\sc F.~Bethuel, H.~Brezis, and F.~Helein}, {\em {Ginzburg-Landau Vortices}},
  Springer International Publishing, 2017.

\bibitem{Bourgain2001}
{\sc J.~Bourgain, H.~Brezis, and P.~Mironescu}, {\em {Another look at Sobolev
  spaces}}, in Optimal control and partial differential equations, IOS,
  Amsterdam, 2001, pp.~439--455.

\bibitem{BrownB1962}
{\sc W.~F. Brown}, {\em {Magnetostatic principles in ferromagnetism}},
  North-Holland Publishing Company, New York, 1962.

\bibitem{BrownB1963}
\leavevmode\vrule height 2pt depth -1.6pt width 23pt, {\em {Micromagnetics}},
  Interscience Publishers, London, 1963.

\bibitem{Camosi_2017}
{\sc L.~Camosi, S.~Rohart, O.~Fruchart, S.~Pizzini, M.~Belmeguenai,
  Y.~Roussign{\'{e}}, A.~Stashkevich, S.~M. Cherif, L.~Ranno, M.~de~Santis, and
  J.~Vogel}, {\em {Anisotropic Dzyaloshinskii-Moriya interaction in ultrathin
  epitaxial Au/Co/W(110)}}, Physical Review B, 95 (2017), p.~214422.

\bibitem{carbou2001thin}
{\sc G.~Carbou}, {\em {Thin layers in micromagnetism}}, Mathematical Models and
  Methods in Applied Sciences, 11 (2001), pp.~1529--1546.

\bibitem{Coey2021}
{\sc J.~M.~D. Coey and S.~S. Parkin}, eds., {\em {Handbook of Magnetism and
  Magnetic Materials}}, Springer International Publishing, 2021.

\bibitem{dal1993introduction}
{\sc G.~Dal~Maso}, {\em {Introduction to $\Gamma$-convergence}}, Birkhäuser
  Basel, 1993.

\bibitem{Davoli2022}
{\sc E.~Davoli, G.~Di~Fratta, D.~Praetorius, and M.~Ruggeri}, {\em
  {Micromagnetics of thin films in the presence of Dzyaloshinskii-Moriya
  interaction}}, Mathematical Models and Methods in Applied Sciences, 32
  (2022), pp.~911--939.

\bibitem{DeSimoneKohnMuellerOtto02}
{\sc A.~DeSimone, R.~V. Kohn, S.~M\"{u}ller, and F.~Otto}, {\em {A reduced
  theory for thin-film micromagnetics}}, Communications on Pure and Applied
  Mathematics, 55 (2002), pp.~1408--1460.

\bibitem{Di_Fratta_2020}
{\sc G.~Di~Fratta}, {\em {Micromagnetics of curved thin films}}, Zeitschrift
  für angewandte Mathematik und Physik, 71 (2020).

\bibitem{Di_Fratta_2019_var}
{\sc G.~Di~Fratta, C.~B. Muratov, F.~N. Rybakov, and V.~V. Slastikov}, {\em
  {Variational Principles of Micromagnetics Revisited}}, SIAM Journal on
  Mathematical Analysis, 52 (2020), pp.~3580--3599.

\bibitem{Di_Fratta_2019}
{\sc G.~Di~Fratta, V.~Slastikov, and A.~Zarnescu}, {\em {On a Sharp
  Poincar\'{e}-Type Inequality on the 2-Sphere and its Application in
  Micromagnetics}}, {SIAM} Journal on Mathematical Analysis, 51 (2019),
  pp.~3373--3387.

\bibitem{Carmo2016}
{\sc M.~P. do~Carmo}, {\em {Differential geometry of curves \& surfaces}},
  Dover Publications, Inc., Mineola, NY, 2016.
\newblock Revised \& updated second edition of [ MR0394451].

\bibitem{dzyaloshinsky1958thermodynamic}
{\sc I.~Dzyaloshinsky}, {\em {A thermodynamic theory of weak ferromagnetism of
  antiferromagnetics}}, Journal of Physics and Chemistry of Solids, 4 (1958),
  pp.~241--255.

\bibitem{Eells1995}
{\sc J.~Eells and L.~Lemaire}, {\em {Two reports on harmonic maps}}, World
  Scientific Publishing Co., Inc., River Edge, NJ, 1995.

\bibitem{Eells1964}
{\sc J.~Eells, Jr. and J.~H. Sampson}, {\em {Harmonic mappings of Riemannian
  manifolds}}, American Journal of Mathematics, 86 (1964), pp.~109--160.

\bibitem{Fert_2017}
{\sc A.~Fert, N.~Reyren, and V.~Cros}, {\em {Magnetic skyrmions: advances in
  physics and potential applications}}, Nature Reviews Materials, 2 (2017).

\bibitem{Ga_2022}
{\sc Y.~Ga, Q.~Cui, Y.~Zhu, D.~Yu, L.~Wang, J.~Liang, and H.~Yang}, {\em
  {Anisotropic Dzyaloshinskii-Moriya interaction protected by D2d crystal
  symmetry in two-dimensional ternary compounds}}, npj Computational Materials,
  8 (2022).

\bibitem{GioiaJames97}
{\sc G.~Gioia and R.~D. James}, {\em {Micromagnetics of very thin films}},
  Proceedings of the Royal Society of London. Series A: Mathematical, Physical
  and Engineering Sciences, 453 (1997), pp.~213--223.

\bibitem{Hubert1998}
{\sc A.~Hubert and R.~Sch{\"a}fer}, {\em {Magnetic Domains}}, Springer Berlin
  Heidelberg, 1998.

\bibitem{KnuepferMuratovNolte19}
{\sc H.~Kn\"{u}pfer, C.~B. Muratov, and F.~Nolte}, {\em {Magnetic domains in
  thin ferromagnetic films with strong perpendicular anisotropy}}, Archive for
  Rational Mechanics and Analysis, 232 (2019), pp.~727--761.

\bibitem{KohnSlastikov05}
{\sc R.~V. Kohn and V.~V. Slastikov}, {\em {Another thin-film limit of
  micromagnetics}}, Archive for Rational Mechanics and Analysis, 178 (2005),
  pp.~227--245.

\bibitem{KohnSlastikov-dyn05}
\leavevmode\vrule height 2pt depth -1.6pt width 23pt, {\em {Effective dynamics
  for ferromagnetic thin films: a rigorous justification}}, Proceedings of The
  Royal Society of London. Series A. Mathematical, Physical and Engineering
  Sciences, 461 (2005), pp.~143--154.

\bibitem{Kuz_min_2005}
{\sc M.~D. Kuz'min}, {\em {Shape of Temperature Dependence of Spontaneous
  Magnetization of Ferromagnets: Quantitative Analysis}}, Physical Review
  Letters, 94 (2005), p.~107204.

\bibitem{LandauA1935}
{\sc L.~Landau and E.~Lifshitz}, {\em {On the theory of the dispersion of
  magnetic permeability in ferromagnetic bodies}}, in Perspectives in
  Theoretical Physics, Elsevier, 1992, pp.~51--65.

\bibitem{Lima1988}
{\sc E.~L. Lima}, {\em {The Jordan-Brouwer separation theorem for smooth
  hypersurfaces}}, American Mathematical Monthly, 95 (1988), pp.~39--42.

\bibitem{Lin2008}
{\sc F.~Lin and C.~Wang}, {\em {The analysis of harmonic maps and their heat
  flows}}, World Scientific Publishing Co. Pte. Ltd., Hackensack, NJ, 2008.

\bibitem{MaKSheka2022}
{\sc D.~Makarov and D.~D. Sheka}, eds., {\em {Curvilinear Micromagnetism}},
  Springer International Publishing, 2022.

\bibitem{Melcher10}
{\sc C.~Melcher}, {\em {Thin-film limits for Landau-Lifshitz-Gilbert
  equations}}, SIAM Journal on Mathematical Analysis, 42 (2010), pp.~519--537.

\bibitem{Melcher14}
\leavevmode\vrule height 2pt depth -1.6pt width 23pt, {\em {Chiral skyrmions in
  the plane}}, Proceedings of The Royal Society of London. Series A.
  Mathematical, Physical and Engineering Sciences, 470 (2014), p.~20140394.

\bibitem{Melcher_2019}
{\sc C.~Melcher and Z.~N. Sakellaris}, {\em {Curvature-stabilized skyrmions
  with angular momentum}}, Letters in Mathematical Physics, 109 (2019),
  pp.~2291--2304.

\bibitem{MoriniSlastikov18}
{\sc M.~Morini and V.~Slastikov}, {\em {Reduced models for ferromagnetic thin
  films with periodic surface roughness}}, Journal of Nonlinear Science, 28
  (2018), pp.~513--542.

\bibitem{moriya1960anisotropic}
{\sc T.~Moriya}, {\em {Anisotropic Superexchange Interaction and Weak
  Ferromagnetism}}, Physical Review, 120 (1960), pp.~91--98.

\bibitem{Moser04}
{\sc R.~Moser}, {\em {Boundary vortices for thin ferromagnetic films}}, Archive
  for Rational Mechanics and Analysis, 174 (2004), pp.~267--300.

\bibitem{Moser05}
\leavevmode\vrule height 2pt depth -1.6pt width 23pt, {\em {Moving boundary
  vortices for a thin-film limit in micromagnetics}}, Communications on Pure
  and Applied Mathematics, 58 (2005), pp.~701--721.

\bibitem{Muratov_2017}
{\sc C.~B. Muratov and V.~V. Slastikov}, {\em {Domain structure of ultrathin
  ferromagnetic elements in the presence of Dzyaloshinskii-Moriya
  interaction}}, Proceedings of the Royal Society A: Mathematical, Physical and
  Engineering Sciences, 473 (2017), p.~20160666.

\bibitem{nagaosa2013topological}
{\sc N.~Nagaosa and Y.~Tokura}, {\em {Topological properties and dynamics of
  magnetic skyrmions}}, Nature Nanotechnology, 8 (2013), pp.~899--911.
\newblock {doi: }10.1038/nnano.2013.243.

\bibitem{praetorius2004analysis}
{\sc D.~Praetorius}, {\em {Analysis of the operator $\Delta^{-1}\textrm{div}$
  arising in magnetic models}}, Zeitschrift für Analysis und ihre Anwendungen,
  23 (2004), pp.~589--605.

\bibitem{Schoen1997}
{\sc R.~Schoen and S.-T. Yau}, {\em {Lectures on harmonic maps}}, Conference
  Proceedings and Lecture Notes in Geometry and Topology, II, International
  Press, Cambridge, MA, 1997.

\bibitem{Slastikov05}
{\sc V.~Slastikov}, {\em {Micromagnetics of thin shells}}, Mathematical Models
  and Methods in Applied Sciences, 15 (2005), pp.~1469--1487.

\bibitem{Streubel_2016}
{\sc R.~Streubel, P.~Fischer, F.~Kronast, V.~P. Kravchuk, D.~D. Sheka,
  Y.~Gaididei, O.~G. Schmidt, and D.~Makarov}, {\em {Magnetism in curved
  geometries}}, Journal of Physics D: Applied Physics, 49 (2016), p.~363001.

\bibitem{Venkatesan_2004}
{\sc M.~Venkatesan, C.~B. Fitzgerald, and J.~M.~D. Coey}, {\em {Unexpected
  magnetism in a dielectric oxide}}, Nature, 430 (2004), pp.~630--630.

\bibitem{Virga_2018}
{\sc E.~G. Virga}, {\em {Variational Theories for Liquid Crystals}}, Chapman
  and Hall/{CRC}, dec 2018.

\bibitem{Wloka1995}
{\sc J.~Wloka, B.~Rowley, and B.~Lawruk}, {\em {Boundary value problems for
  elliptic systems}}, Cambridge University Press, Cambridge, 1995.

\end{thebibliography}
\bibliographystyle{siam}

\end{document}